\newcommand{\R}{\mathbb{R}}
\newcommand{\ud}{\textrm{d}}
\newcommand{\scri}{{\mathscr I}}
\newcommand{\scal}{\textrm{Scal}}
\newcommand{\som}{{\Sigma_0^{u_0<}}}
\newtheorem{theorem}{Theorem}[section]
\newtheorem*{theorem*}{Theorem}
\newtheorem{remark}[theorem]{Remark}
\newtheorem{lemma}[theorem]{Lemma}
\newtheorem{corollary}[theorem]{Corollary}
\newtheorem{definition}[theorem]{Definition}
\newtheorem{proposition}[theorem]{Proposition}
\newtheorem{assumption}{Assumption}
\subjclass[2010]{35L05, 35P25, 53A30, 35Q75}
\begin{document}

\title[Conformal Scattering]{H\"ormander's method for the characteristic Cauchy problem and conformal scattering for a non linear wave equation}
 \author{J\'er\'emie Joudioux}
 \address{Albert-Einstein-Institut, Max-Planck-Institut f\"ur Gravitationsphysik, Am M\"uhlenberg 2, 14476 Potsdam, Germany }
 \email{jeremie.joudioux@aei.mpg.de}
\urladdr{jeremiejoudioux.eu}
\maketitle

\begin{abstract} The purpose of this note is to prove the existence of a conformal scattering operator for the cubic defocusing wave equation on a non-stationary background. The proof essentially relies on solving the characteristic initial value problem by the method developed by H\"ormander. This method consists in slowing down the propagation speed of the waves to transform a characteristic initial value problem into a standard Cauchy problem. 
\end{abstract}

\section*{Introduction}

\subsection*{The result}
Consider a globally hyperbolic smooth manifold \((M,\hat g)\), whose metric $\hat g$ satisfies the vacuum Einstein equations. Let $\Sigma$ be a Cauchy hypersurface in \(M\), and $T$ be a future oriented timelike vector normal to $\Sigma$. Consider the Cauchy problem for the defocusing cubic equation
\begin{equation}\label{eq:cauchycubicwave}
\begin{aligned} 
\hat\nabla_\alpha \hat\nabla^\alpha \hat{\phi} &= \hat{\phi}^3  \\
(\hat{\phi}, T(\hat{\phi}) |_{t =0}) &\in C_0^\infty (\Sigma) \times C_0^\infty (\Sigma), 
\end{aligned}
\end{equation}
where $\hat \nabla$ is the Levi-Civita connection associated with $\hat{g}$. Since \((M,\hat g)\) is globally hyperbolic, there exists a time foliation on the manifold, and the global existence of solutions and the existence of scattering operator can be discussed for this problem. Nonetheless, since the metric is a priori not static, the standard approach to prove the global existence, for instance, through Strichartz estimates, and the existence of the scattering operator, which would require a proof of the local energy decay, might fail. The cubic wave equation \eqref{eq:cauchycubicwave} enjoys nonetheless good properties regarding conformal transformations. Indeed, if one considers a metric ${g}$ conformal to the original metric $\hat{g}$
\[
g = \Omega^2 \hat g,
 \]
one obtains the following identity
\[
\Omega^{-3}(\hat \nabla_\alpha \hat \nabla^\alpha \hat\phi - \hat\phi^3) = {\nabla}_\alpha {\nabla}^\alpha (\Omega^{-1}\hat\phi)  + \dfrac16\text{scal}_g (\Omega^{-1}\hat\phi) - (\Omega^{-1}\hat\phi) ^3, 
\]
where ${\nabla}_\alpha$ is the Levi-Civita connection associated with $g$. Hence, if $\hat\phi$ is a solution to the problem \eqref{eq:cauchycubicwave}, then $\Omega^{-1} \hat\phi$ is a solution to the corresponding geometric equation arising from the conformal metric ${g}$. This property can be exploited to prove the global existence of solutions to the problem \eqref{eq:cauchycubicwave} by considering the local existence for the conformal problem, and construct a scattering operator by relying on that operation.

Penrose introduced in the '60s a class of space-times admitting a conformal compactification. These space-times, known as asymptotically simple space-times, model isolated bodies. The compactification is obtained by completing the manifold with two disconnected hypersurfaces, null for the conformal metric when the cosmological constant vanishes, which represent the extremities of future and past null geodesics. These hypersurfaces are denoted by $\scri^+$ and $\scri^-$ respectively. The strategy for the construction of the scattering operator exploiting the existence of the conformal compactification is the following. For convenience, the construction is done for smooth data with compact support.

Consider smooth compactly supported initial data $(\hat\phi_0, \hat\phi_1)$ on $\Sigma$ for the Cauchy problem \eqref{eq:cauchycubicwave}. These data are appropriately transformed into data \(({\phi}_0, {\phi}_1)\) for the conformal cubic wave equation:
\begin{equation} \label{eq:confcauchycubicwave}
\begin{aligned}
{\nabla}_\alpha {\nabla}^\alpha (\Omega^{-1}\hat\phi)  + \dfrac16\text{scal}_g (\Omega^{-1}\hat\phi) - (\Omega^{-1}\hat\phi) ^3 =  0\\
({\phi}, \hat{T}(({\phi}))|_{t =0}  = ({\phi}_0, {\phi}_1)  \in C^\infty_0{(\hat{\Sigma})} \times C^\infty_0(\hat{\Sigma}). 
\end{aligned}
\end{equation}
Since the problem is now set on a space compact in time, the global existence problem for the Cauchy problem \eqref{eq:cauchycubicwave} turns into a much easier local existence problem \eqref{eq:confcauchycubicwave} for the rescaled equation. The existence result of Cagnac-Choquet-Bruhat \cite{MR789558} can be used to address the existence in this context. The radiation profile of the function $\hat{\phi}$ can be obtained by considering the trace of $\Omega^{-1} \hat{\phi}$. We define the mapping, generalising the inverse wave operators:
\begin{equation}
\mathfrak{T}^\pm: ({\phi}_0, {\phi}_1)  \in C^\infty_0{(\hat{\Sigma})} \times C^\infty_0(\hat{\Sigma}) \mapsto \Omega^{-1} \hat \phi |_{\scri^\pm}. 
\end{equation}
Energy estimates can be proven for the cubic wave operator and we prove in fact
 $$
  \mathfrak{T}^\pm \left( C^\infty_0{(\hat{\Sigma})} \times C^\infty_0(\hat{\Sigma})\right) \subset H^1(\scri^\pm).
  $$
The existence of the wave operators defined on \(H^1(\scri^\pm)\), inverses of the trace operators \( \left(\mathfrak{T}^\pm\right)^{-1} \), is obtained by solving the characteristic Cauchy problem for the conformal cubic wave equation with data on $\scri^{\pm}$. The generalisation of the standard scattering operator is defined as the operator
\[
\mathfrak{S}   =   \mathfrak{T}^+ \circ  \left(\mathfrak{T}^-\right) ^{-1}.
\]
In this paper, we prove the existence of a bi-Lipschitz conformal scattering operator, which extends the notion of scattering operator to space-times which are non static:
\begin{theorem*}
  There exists a bi-Lipschitz operator \(\mathfrak{S} : H^{1}(\scri^{-}) \rightarrow   H^{1}(\scri^{+}) \), which associates to past radiation profiles of solutions to the problem \eqref{eq:cauchycubicwave} the corresponding future radiation profiles.
\end{theorem*}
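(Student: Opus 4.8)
The plan is to turn the conformal invariance exhibited in the introduction into the backbone of the construction: the rescaling $g=\Omega^2\hat g$ converts the global-in-time problem \eqref{eq:cauchycubicwave} on $(M,\hat g)$ into the local-in-time problem \eqref{eq:confcauchycubicwave} on the conformally compactified spacetime, and I would build $\mathfrak S=\mathfrak T^+\circ(\mathfrak T^-)^{-1}$ by proving that each factor is bi-Lipschitz between the finite-energy data space $H^1(\hat\Sigma)\times L^2(\hat\Sigma)$ and the trace space $H^1(\scri^\pm)$. The first task is to fix the energy current for the rescaled field $\psi=\Omega^{-1}\hat\phi$. Contracting \eqref{eq:confcauchycubicwave} with the future-directed normal and integrating the stress--energy flux over the lens-shaped region bounded by $\hat\Sigma$ and a slice of $\scri^\pm$, the \emph{defocusing} sign of the cubic term produces a coercive potential, so that the energy
\[
E_{\hat\Sigma}(\psi)=\int_{\hat\Sigma}\Bigl(|\nabla\psi|^2+(T\psi)^2+\tfrac14\psi^4\Bigr)\,\ud\mu
\]
controls, and is controlled by, the flux through $\scri^\pm$, which reproduces exactly the $H^1(\scri^\pm)$ norm up to the nonnegative quartic term. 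The scalar-curvature contribution $\tfrac16\scal_g\,\psi$ is a smooth zeroth-order coefficient on the compact rescaled spacetime and is absorbed by a Gr\"onwall argument.

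Granting this two-sided estimate, the trace operators are well defined on smooth compactly supported data: the existence result of Cagnac--Choquet-Bruhat \cite{MR789558} yields a smooth solution $\psi$, and the forward estimate bounds its restriction to $\scri^\pm$, giving $\mathfrak T^\pm\bigl(C_0^\infty(\hat\Sigma)\times C_0^\infty(\hat\Sigma)\bigr)\subset H^1(\scri^\pm)$. To obtain the Lipschitz property I would write the equation satisfied by the difference $\psi_1-\psi_2$ of two solutions: it solves a \emph{linear} wave equation whose potential is the factorised cubic $\psi_1^2+\psi_1\psi_2+\psi_2^2$. In $3+1$ dimensions the Sobolev embedding $H^1\hookrightarrow L^6$ places this potential in $L^3$, so that, paired against the $L^6$ factor $\psi_1-\psi_2$, the nonlinear source lands in $L^2$; the energy estimate for the difference then closes with a constant depending only on the energy of the data. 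This shows $\mathfrak T^\pm$ is Lipschitz on bounded energy balls, with an analogous bound in the reverse direction once the inverse is built.

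The central step is the inversion, i.e.\ solving the characteristic Cauchy problem for \eqref{eq:confcauchycubicwave} with data prescribed on $\scri^-$ (respectively $\scri^+$). Here I would follow H\"ormander's device: approximate the null hypersurface by non-characteristic hypersurfaces obtained by artificially slowing the propagation speed, solve the resulting family of ordinary Cauchy problems via \cite{MR789558}, and extract a limit as the speed is restored. The reverse energy estimate guarantees that the approximating solutions remain uniformly bounded in the energy space and that their traces converge, producing a solution whose profile on $\scri^-$ is the prescribed datum; its induced data on $\hat\Sigma$ define $(\mathfrak T^-)^{-1}$, which the backward estimate renders Lipschitz. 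For general data in $H^1(\scri^\pm)$ I would argue by density: approximate the profile by smooth compactly supported data, run the Hörmander scheme, and pass to the limit using the uniform Lipschitz bounds, the cubic nonlinearity being handled exactly as above through $H^1\hookrightarrow L^6$.

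Composing the two bi-Lipschitz factors, $\mathfrak S=\mathfrak T^+\circ(\mathfrak T^-)^{-1}$ is bi-Lipschitz with inverse $\mathfrak T^-\circ(\mathfrak T^+)^{-1}$, which is the assertion. I expect the genuine obstacle to lie in the characteristic problem: controlling the cubic term in the H\"ormander limit requires energy bounds that are \emph{uniform in the slowing parameter} and do not degenerate as the approximating hypersurfaces become null, and closing the a priori estimate for the difference of two solutions demands that the $L^6$ norms stay bounded along the whole approximating family. Verifying that the defocusing sign survives the conformal rescaling---so that the quartic term keeps the energy coercive on the compactified spacetime and prevents concentration at $\scri^\pm$---is the point where the argument must be handled with the greatest care.
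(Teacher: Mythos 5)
Your outline reproduces the paper's overall architecture --- conformal rescaling, energy equivalence between $\hat\Sigma$ and $\scri^\pm$, the Baez--Segal--Zhou trick of writing the difference of two solutions as a solution of a linear equation with potential $\psi_1^2+\psi_1\psi_2+\psi_2^2$ controlled via $H^1\hookrightarrow L^6$, and H\"ormander's slowing-down of the propagation speed to invert $\mathfrak T^\pm$. But there is a genuine gap: you apply the H\"ormander scheme to the whole of $\scri^\mp$, and this cannot work near spacelike infinity $i^0$. The method requires embedding the region swept by the foliation into a \emph{compact} cylinder, so that all the leaves are uniformly equivalent compact $3$-manifolds and the Sobolev embeddings needed for the cubic term (and the compactness arguments --- Rellich--Kondrachov, compactness of the trace operator --- needed to pass to the limit in $\phi_\lambda^3$ and to identify the trace on the null hypersurface) hold with constants uniform over the foliation and over $\lambda$. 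Near $i^0$ the assumed geometry is exactly Schwarzschild and the conformal factor $\Omega=1/r$ does \emph{not} compactify spatial infinity, so no such embedding exists there. The paper therefore splits the construction: H\"ormander's method is used only in a neighbourhood of $i^\pm$; in the Schwarzschild region $\{u\le u_0\}$ it uses the explicit hyperboloidal foliation $\mathcal H_s=\{u=-sr^\ast\}$, whose leaves are uniformly equivalent to cylinders, together with decay estimates on the metric coefficients and a Picard iteration converging for $s\in[0,\varepsilon]$ with $\varepsilon$ small; the two solutions are then glued across an intermediate spacelike hypersurface and propagated down to $\Sigma_0$ by the ordinary Cauchy theory of Cagnac--Choquet-Bruhat. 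Without this decomposition your inversion of $\mathfrak T^-$ is not defined on all of $H^1(\scri^-)$.

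Two smaller points. First, you correctly flag that the uniform-in-$\lambda$ bound is the delicate step but do not resolve it; in the paper it follows from the global existence result for the defocusing equation applied to each metric $g_\lambda$, with constants depending continuously on $\lambda$ over the compact interval $[\tfrac12,1]$, hence bounded. Second, your limiting argument only produces a weak solution satisfying the equation in the sense of distributions; upgrading it to a strong solution in $C^0([0,T],H^1(\Sigma_\tau))\cap C^1([0,T],L^2(\Sigma_\tau))$ --- which is what the uniqueness statement and the Lipschitz estimates are formulated for --- requires the additional step, carried out in the paper via the Lions intermediate-derivative trace theorem and the local Lipschitz continuity of the energy, showing $\partial_\tau^2\phi\in L^\infty([0,T],H^{-1}(\Sigma_\tau))$ and deducing continuity in time of the $H^1\times L^2$ data.
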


\subsection*{Conformal techniques, scattering, asymptotic behaviour}

The well-posedness of the Cauchy problem for this equation, and in this geometric setting, has been addressed in \cite{MR789558}. The existence of a scattering operator on Minkowski space-time has been considered, on flat space-times in \cite{Karageorgis:2006wg}, for general semi-linear wave equations, and by conformal methods on flat space-time in \cite{MR1073286}. The existence of a scattering operator, under more constraining assumptions, has been considered by the author in \cite{Joudioux:2012eo}, and this work extends this previous work to the generic cubic wave equation. In particular, we remove an artificial assumption in the decay of the coefficient of the non-linearity. The purpose of this assumption was to compensate for the blow-up of the Sobolev constant associated with the Sobolev embeddings of $H^{1}$ into $L^{6}$ in dimension $3$.

Since the metric is not stationary, this geometrical setting is a priori not amenable to standard analytic techniques to prove the existence of a scattering operator. To construct this operator, we exploit the conformal invariance of the equation, in conjunction with the conformal compactification of the space-time. The rescaled solution $\Omega^{-1}\hat\phi = \phi$ can be extended up to $\hat{M}$ by the mean of Equation \eqref{eq:confcauchycubicwave}. The trace of $\Omega^{-1}\hat\phi$ on the boundaries are the radiation profiles, in this conformal setting, discussed by Friedlander \cite{MR583989}. Hence, the trace operators $\mathfrak{T}^{\pm}$, associating to initial data for the Cauchy problem for the conformal wave equation \eqref{eq:confcauchycubicwave} to the traces of the corresponding solutions to \eqref{eq:confcauchycubicwave} on the boundaries $\scri^{\pm}$, generalises the standard inverse wave operators of the classical scattering theory, see \cite{mn04}. The actual existence of the scattering operator is performed by inverting the trace operators, that is to say, within the appropriate function spaces, solving the characteristic Cauchy problem with data on the boundaries of $M$.

Conformal methods to study global problems for partial differential equations in relativity go back to Penrose and Sachs and the peeling of higher spin fields. It was further studied in the context of the Cauchy problem in the mid-80s, early '90s (see for instance \cite{MR789558}). Friedlander \cite{MR583989} pioneered scattering theory in relativity. The reader who wishes to have a more exhaustive bibliography should refer to \cite{Joudioux:2010tn,mokdad:tel-01502657, pham:tel-01630023}. In the past years, this approach by conformal techniques to understand the asymptotic behaviour of solutions of field equations in general relativity has been studied in various contexts: Mason and Nicolas \cite{mn04} obtained the first analytic result for linear fields, followed by a peeling result on the Schwarzschild background, for scalar wave \cite{MR2461904}, spin $1/2$ and spin $1$ fields \cite{MR2888989}. This has been later extended to non-linear waves on Kerr black holes \cite{2018arXiv180108996N}. As mentioned before, the conformal scattering construction was extended to a non-linear wave equation \cite{Joudioux:2012eo}. Similar constructions based on local energy estimates \cite{2017arXiv170406441M} were obtained on Reissner-Nordstr\"om black holes \cite{2017arXiv170606993M} for the Maxwell equations. More recently, the existence of a conformal scattering operator has been proven for Yang-Mills fields on the de Sitter background \cite{2018arXiv180901559T}. 
%The approach described in this work goes clearly against the general stream of the contemporary study of the asymptotic behaviour of fields in general relativity, which is still primarily dominated by the search of the proof of the stability of Kerr black holes, and techniques related to the vector-field methods. The aim remains to develop low-key alternative approaches to the study of the asymptotic behaviour of fields, by relying on existing geometric structures, in particular, conformal structures.

\subsection*{Description of the work}
We now review the key points of the result. As already mentioned, this work is an extension of previous work \cite{Joudioux:2012eo}. We are in particular building on the estimates performed in the asymptotic spatial region $i^0$.

The main improvement of this paper is the possibility to handle a more general non-linearity. In the previous paper, we assumed that the non-linearity was multiplied by a decaying function $b$. This restriction has its origin in the treatment of the characteristic initial value problem on $\scri^\pm$ to construct the inverse of the trace operators $\mathfrak{T}^\pm$. The technique relies on a fixed point argument in the energy space obtained by foliating the interior of the light cones at infinity $\scri^{\pm}$. Because of the power non-linearity, Sobolev embeddings are required. Nonetheless, the volume of the leaves goes to zero, and a well-known consequence is the blow-up of the related Sobolev constant, associated with the Sobolev embedding from $H^{1}$ into $L^6$. The non-linearity is multiplied by a function decaying sufficiently fast to compensate for this blow-up of the Sobolev constant.

To circumvent that issue, the strategy to approach the characteristic initial value problem is changed. In particular, this strategy avoids the use of the Sobolev embeddings on a shrinking foliation, and relies on the work by H\"ormander \cite{MR1073287} for the wave equation. The principle is the following. One assumes the existence of a solution for the Cauchy problem for the considered geometric equation\footnote{By geometric equation, we would like to insist that the operator defining the equation is depending on the metric.}. One considers an initial characteristic surface $C$. A time function being chosen to split the metric
\[
g = -N^2 dt^2  + h_{\Sigma_t},
\]
where $h_{\Sigma_t}$ is the Riemannian metric on the leaves of the foliation induced by the time function $(\Sigma_t)$, the propagation speed of the metric is slowed down
\[
g_\lambda = - \lambda N^2 dt^2  + h_{\Sigma_t},\mbox{ for } \lambda \in \left[1/2, 1\right).
\]\
The initial characteristic hypersurface $C$ becomes spacelike for $g_{\lambda}$, and solutions to the Cauchy problem for the geometric equation associated with $g_{\lambda}$ can be solved. In the case of the cubic wave equation, this result has been obtained by Choquet-Bruhat - Cagnac \cite{MR789558}. Given initial data, a family of solutions is then obtained, depending on $\lambda$. Standard compactness arguments can then be used to prove the existence of an accumulation point as \(\lambda\) goes to one which satisfies the characteristic initial value problem. This method by H\"ormander has been extended to metrics of weak regularity in \cite{MR2244222} for the linear wave equation. This paper clarifies some points of \cite{MR2244222}, in particular in the way some energy estimates are performed and in the use of trace theorems for intermediate derivatives.

This technique by H\"ormander can only be used in a neighbourhood of timelike infinity, where the Penrose compactification leads indeed to a compact space. Near spacelike infinity $i^0$, the metric coincides with the Schwarzschild metric, and the chosen conformal factor does not lead to a compactification in the neighbourhood of $i^0$. There,  the existence of solutions to the characteristic initial value problem can be proven by relying on an explicit foliation. Global solutions to the Cauchy problem in the past of $\scri^+$ (resp. the future of $\scri^{-}$) are obtained by appropriately gluing the different solutions to the characteristic initial value problem in the neighbourhood of $i^\pm$ and the neighbourhood of $i^0$.

\subsection*{Organisation of the paper} Section \ref{sec:preliminaries} contains the precise geometric framework, in Section \ref{sec:asymptoticallysimple}, and reminders on the cubic wave equation, in Section \ref{sec:remindercubicwave}, in particular, the global existence of solutions to the defocusing wave equations, see Proposition \ref{hor3} and some estimates for the characteristic initial value problem, see Proposition \ref{prop:aprioricar}. Section \ref{sec:car} solves the characteristic initial value problem for the cubic wave equation, by H\"ormander's method, see Theorem \ref{thm:existence}. Section \ref{sec:uniqueness} addresses the proof of the uniqueness of  solutions to the Cauchy problem, see Proposition \ref{charest2}.  The next section, Section \ref{sec:neighbourhoodi0}, addresses the problem of solving the characteristic initial value problem, see Proposition \ref{prop:existenceschwarz}. The global characteristic initial value problem is solved in Section \ref{sec:globalcauchy}. Finally, the trace operators, as well as the scattering operators, are obtained in Section \ref{sec:scattering}, see Theorem \ref{thm:mainthm}, and its corollary, Corollary \ref{cor:existencescatt}. Appendix \ref{app:trace} contains reminders on a trace theorem, see in particular Theorem \ref{tracelm}, and its use when energy estimates can be proven.

\subsection*{Acknowledgements}  I want to thank Jean-Philippe Nicolas, whose strong incentive incited me to complete these notes which I drafted a long time ago.

\numberwithin{equation}{section}

\begin{sloppypar}

\section{Geometrical and analytical preliminaries}\label{sec:preliminaries}

\subsection{Conventions, notations} All along the paper, the metrics have signature $(-+++)$, and we use the Einstein summation conventions. The expression $a \lesssim b$, where $a$ and $b$ are two functions defined on $M$ means that there exists a constant $C>0$ depending on the geometry such that
$$
a \leq C b \mbox{ on }M.
$$
If $a \lesssim b$ and $b \lesssim a$, then one writes $a \approx b$.

\subsection{Regular asymptotically simple space-time}\label{sec:asymptoticallysimple}

Penrose introduced asymptotically simple space-times \cite{MR944085} as space-times, that is to say, manifolds endowed with a metric satisfying the Einstein equations, modelling a spatially localised gravitational perturbations. The existence of such space-times (solutions to the Einstein equations in vacuum with vanishing cosmological constant) can be obtained by combining the result of gluing by Corvino - Schoen and Chru\'sciel - Delay \cite{MR1920322,MR1902228,MR2225517}, and the result by Andersson - Chru\'sciel \cite{MR1269390} on the existence of space-time with smooth $\scri^{+}$ arising from regular enough data on a hyperboloid. The important point is that it generates a family of solutions to the Einstein equations which are not static, and develop nonetheless a regular $\scri$. Such solutions are by construction future and past geodesically complete \cite{MR868737}, and therefore can be seen as perturbations of Minkowski space-time, though the precise relation to any Minkowski stability result \cite{MR1946854} is yet to be clarified.

\begin{definition}[Regular asymptotically simple space-times]\label{def:asymptoticallysimple} A space-time $(\hat {M},\hat{g})$, satisfying the Einstein equations in vacuum, is a regular asymptotically simple space-time if there exists a manifold $ M$, a Lorentzian metric $g$ on $ M$ and a $C^\infty$-conformal factor $\Omega$ such that:
\begin{enumerate}
 \item $M$ is embedded in $\hat M$ and, on $M$, $ g = \Omega^2 \hat g$;
\item $ g$ and $\Omega$ are $C^\infty$ over $M$;
\item $\Omega$ is positive on $M$ and $\ud \Omega \neq 0$;
\item Any inextensible null geodesic admits a future (resp. past) endpoint on $\scri^+$ (resp. $\scri^-$).
\end{enumerate}
\end{definition}

The framework of this paper imposes the use of energy estimates to prove the existence of a solution of the Cauchy problem set up on the characteristic cones at infinity. These energy estimates usually rely on the use of Stokes' theorem, requiring that the metric is regular enough at the tip of the cone. This is why an extra regularity assumption is made at tips of the boundary of the manifold.
\begin{assumption} Let $(\hat M,\hat g)$ be regular asymptotically simple space-time whose conformal compactification is denoted by $( M,  g  = \Omega^2 \hat g)$. We assume that there exist a neighbourhood $U$ of $i^0$ in $ M$ and a system of coordinates $(t, r, \theta, \phi)$ on $U\cap \hat M$ such that, in $U\cap \hat M$, the metric is the Schwarzschild metric
$$
\tilde g = \left(1-\frac{2m}{r}\right) \ud t^2 - \left(1-\frac{2m}{r}\right)^{-1}\ud r^2 -r^2 \left(\ud\theta ^2 + \sin^2\left(\theta\right) \ud \phi^2\right).
$$
\end{assumption}

Another important assumption which is done on the structure of the asymptotically simple space-time and which is the most important restriction on the geometry:
\begin{assumption} Assume that there exists a neighbourhood \(U\) of $i^\pm$ in \({M}\), and an embedding of \(U\) into a manifold $\bar{M}$, such that the metric $g$ extends to a \(C^\infty\)-metric onto \(\bar{M}\).
\end{assumption}

\subsection{Some reminders on the cubic wave equation}\label{sec:remindercubicwave}

 \subsubsection{Conformal changes for the cubic wave equation}

 Consider the conformally related metrics \(g\) and \({\hat g}   = \Omega^{-2} g\). Then, it is a straightforward calculation that \(\hat \phi\) is a solution to the cubic wave equation on the manifold \(\hat M\)
 \[
\hat \nabla_\alpha \hat \nabla^\alpha \hat \phi +  \dfrac16 \text{scal}_{\hat g} \hat\phi = \hat \phi^3 
 \]
if, and only if, on \(\hat M\), the function $\phi = \Omega^{-1} \hat\phi$ satisfies the equation
\[
\nabla_\alpha \nabla^\alpha \phi +  \dfrac16 \text{scal}_g \phi =  \phi^3. 
\]

\subsubsection{Estimates for the defocusing wave equation} It should be noticed that we are working here with the \emph{defocusing} wave equation. Given a time foliation $(\Sigma_t)$ on $\hat M$, with unit future oriented normal $T$, the energy $E(t)$ at time $t$
\[
\Vert  \phi \Vert^4_{H^1(\Sigma_t)} + \Vert  D\phi \Vert^2_{L^2(\Sigma_t)}   + \Vert T(\phi) \Vert^2_{L^2(\Sigma_t)}  \lesssim \Vert D \phi \Vert^2_{L^2(\Sigma_t)} + \Vert T(\phi)   \Vert^2_{L^2(\Sigma_t)} + \Vert \phi \Vert^4_{L^4(\Sigma_t)} = E(t).
\]
is approximately conserved along the evolution, in the sense that 
$$
E(t) \lesssim E(0). 
$$
Hence, for the defocusing wave equation, it is possible to prove global existence for large data in $H^1(\Sigma_0)\times L^2(\Sigma_0)$.

The key tool is a Sobolev embedding from $H^1(\Sigma_t)$ into $L^4(\Sigma_t)$ in dimension 3. As long as these embedding can be performed uniformly (for instance, when all the leaves are diffeomorphic to $R^3$, or the $3-$sphere), then the global existence for all data is ensured.

The strategy to perform the estimates is the following: \begin{itemize}
    \item whenever we are working with a finite time interval, we use the standard energy estimates for the linear wave equations, similar to \cite[Chapter 1, \S 3]{sogge}, where, for the local existence, $H^s$-norms are propagated; 
    \item we otherwise use the conservation of the energy with the nonlinear term. 
\end{itemize}

\subsubsection{The Cauchy problem for the cubic wave equation}

The section contains some preliminary known results about the global existence of solutions to the Cauchy problem for the cubic wave equation
\[
 \square \phi = \phi^3 ,
\]
where \(\hat\square = \hat \nabla_\alpha \hat \nabla^\alpha\) is the wave operator associated with the metric \(\hat g\). The well-posedness result of Cagnac and Choquet-Bruhat \cite{MR789558} is recalled:
\begin{proposition}\label{hor3}
Let $X$ be a compact manifold. Consider the Lorentzian manifold $(X\times \R, g)$ so that $X\times \{t\}$ is a family of uniformly spacelike hypersurfaces. Assume that the deformation tensor of the vector $\partial_t$ is bounded as well as a sufficient number of derivatives of the metric. \newline
The Cauchy problem
\begin{equation*}\left\{
\begin{array}{l}
\square \phi = \phi^3\\
\phi|_{X} = \phi_{0}\in H^1(S)\\
T^a\nabla_a \phi|_{X} = \psi_0 \in L^2(X)
\end{array}\right.
\end{equation*}
admits a global solution in $C^1(\R, L^2(X))\cap C^0(\R, H^1(X))$. Furthermore, the a priori estimate holds, for all $t\in \R$:
$$
E_{\phi}(\{t\}\times X)\lesssim  E_{\phi}(\{0\}\times X).
$$
\end{proposition}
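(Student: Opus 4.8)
The plan is to obtain the global solution in two stages: a local existence statement in the energy space, proved by a fixed point argument, followed by a continuation argument driven by the a priori energy estimate, which is itself the heart of the matter for the defocusing equation.

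First I would prove local well-posedness in $H^1(X) \times L^2(X)$. Fixing a short time interval $[0,\delta]$, one sets up the iteration $\square \phi_{n+1} = \phi_n^3$ with the prescribed Cauchy data, so that each step is a \emph{linear} wave equation with a source. Propagating the $H^1 \times L^2$ norm requires the standard energy estimates for the linear wave equation on a finite time interval, in the spirit of \cite[Chapter 1, \S 3]{sogge}; the hypotheses that $X\times\{t\}$ is uniformly spacelike and that the deformation tensor of $\partial_t$ together with enough derivatives of $g$ are bounded guarantee that the constants in these estimates are uniform on $[0,\delta]$. Since $X$ is compact and three-dimensional, the Sobolev embedding $H^1(X) \hookrightarrow L^6(X)$ holds with a uniform constant, so the map $\phi \mapsto \phi^3$ is bounded and locally Lipschitz from $H^1(X)$ into $L^2(X)$. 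A contraction estimate on a sufficiently small $\delta$, depending only on the $H^1\times L^2$ norm of the iterate, then produces a unique local solution in $C^0([0,\delta],H^1(X)) \cap C^1([0,\delta],L^2(X))$.

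Second, and this is the decisive step, I would establish the a priori energy estimate. Contracting the stress-energy tensor of $\phi$ with the vector field $\partial_t$ and integrating the resulting identity over $X\times[0,t]$ by Stokes' theorem, the cubic term contributes through $\phi^3\,\partial_t\phi = \partial_t(\phi^4/4)$; because the equation is \emph{defocusing} this produces the term $+\tfrac14 \Vert \phi \Vert_{L^4(X\times\{t\})}^4$ with a favourable sign, so that $E_\phi(\{t\}\times X)$ is coercive and controls the $H^1\times L^2$ norm of $(\phi, T(\phi))$ at time $t$, as recorded in Section \ref{sec:remindercubicwave}. The non-stationarity of the metric means the identity is not a genuine conservation law: its bulk term is a quadratic expression in $D\phi$ and $T(\phi)$ weighted by derivatives of the lapse $N$ and the deformation tensor of $\partial_t$. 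The boundedness assumption on these quantities lets me dominate this bulk term by $E_\phi(\{t\}\times X)$ itself, yielding the differential inequality $\tfrac{d}{dt} E_\phi(\{t\}\times X) \lesssim E_\phi(\{t\}\times X)$, whence the claimed estimate $E_\phi(\{t\}\times X) \lesssim E_\phi(\{0\}\times X)$ by Gr\"onwall's lemma.

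Finally, I would upgrade the local solution to a global one. The local existence time depends only on the energy-space norm of the data, and the a priori estimate shows that this norm cannot blow up in finite time; the usual continuation argument then extends the solution to all of $\R$, with values in $C^0(\R,H^1(X)) \cap C^1(\R,L^2(X))$. The expected main obstacle is not the nonlinearity --- which is energy-subcritical and handled comfortably by $H^1 \hookrightarrow L^6$ --- but the time-dependence of the metric: one must track carefully the error terms in the energy identity coming from the evolution of the induced metric $h_{\Sigma_t}$ and of the lapse, and verify that the Sobolev and energy constants remain uniform in $t$. Both are controlled precisely by the compactness of $X$ and the boundedness assumptions on the deformation tensor and the derivatives of $g$.
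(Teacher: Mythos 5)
Your argument is correct and is exactly the standard route: local well-posedness by Picard iteration using linear energy estimates and the uniform Sobolev embedding on the compact leaves, the coercive defocusing energy identity with Gr\"onwall to absorb the non-stationarity of the metric, and continuation. The paper does not reprove this proposition --- it recalls it from Cagnac--Choquet-Bruhat \cite{MR789558} --- but the strategy it sketches in Section \ref{sec:remindercubicwave} (linear estimates \emph{\`a la} \cite[Chapter 1, \S 3]{sogge} on finite time intervals, conservation of the nonlinear energy otherwise) is precisely the one you follow.
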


\begin{remark}\label{rem:bfunction} We could add a function $b$ in the equation as follows:
\[
\square \phi = b \phi^3 ,
  \]
Cagnac and Choquet-Bruhat in \cite{MR789558} addressed the local well-posedness for this equation under the following assumptions on the function $b$:  the function $b$ is bounded on $\hat M$, one time differentiable function on $M$, admits a $C^1$-extension to $ M$ and, for a given future-oriented unit timelike vector field $T^a$ on the unphysical space-time $M$, there exists a constant $c$ such that
$$
|T^a\nabla_a b| \lesssim  b.
$$
The conformal scattering operator could be constructed in that situation in a similar fashion. This extension has very little meaning in the context of that work, and its purpose, and is therefore ignored.
\end{remark}

 \subsubsection{A priori estimates for the characteristic initial value problem}

In previous work, we have proven various a priori estimates. Since these estimates are the basis of H\"ormander's method for solving the characteristic Cauchy problem, we recall in this section the various energy estimates which were previously proved. It is important to note that, in this previous work, as noted in \cite{Joudioux:2012eo}, the a priori energy estimates are proved without the decay assumptions at the tip of the vertex on the function $b$, as considered in Remark \ref{rem:bfunction}.

The first result is a local a priori energy estimate in the neighbourhood of the tip of the vertex.  The geometric and analytic settings are the following: let $p$ be a point in $\hat M$ and $U$  an open neighbourhood of $p$ in $M$, with compact closure. Assume that one can define the past light cone globally $C^-(p)$ in $U$. Let $t$ be a time function defining a foliation $\{\Sigma_t\}_{t= 0\cdots 1 }$ of the past of $C^-(p)$. The unit future oriented normal to $\Sigma_t$ is denoted by $T^a$. On $C^-(p)$, let $l$ be a generator of future oriented null geodesics ending up at $p$. Let $n$ be a null vector field transverse to $C^-(p)$ such that
$$
T = \frac{1}{2}\left(l+n\right).
$$
The family $(l,n)$ is completed into a family $(l,n,e_1, e_2)$ so that the family $(l,e_1, e_2)$ is tangent to the cone $C^-(p)$ and $(e_1, e_2)$ is orthogonal to $(l,n)$ . The derivatives with respect to the vectors $e_1, e_2$ are denoted by $\nabla_{\mathbb{S}^2}$.\\
We consider the standard Sobolev spaces on $\Sigma_t$, $H^1(\Sigma_t)$ and $L^2(\Sigma_t)$.
The set of smooth functions on the cone  $C^-(p)$ in the future of $\Sigma_0$ with compact support away from the tip $p$ is endowed with the norm:
$$
\Vert \phi \Vert^2_{H^1(C^-(p))} = \int_{C^-(p)}\left( \vert \nabla_{l} \phi\vert ^2 + \vert\nabla_{\mathbb{S}^2}\phi \vert^2 + \vert\phi \vert^2 \right)n\lrcorner  \ud \mu[\hat g]
$$
 where $n\lrcorner  \ud \mu[\hat g]$ is the contraction of the ambient four dimensional volume form $\ud \mu[\hat g]$ with the null vector $n$ transverse to the light cone $C^-(p)$. The completion for this norm of the set of smooth functions with compact support away from $p$ is denoted by $H^1(C^{-}(p))$. Using standard energy estimates, one gets the following proposition:
\begin{proposition}\label{prop:aprioricar} Let $\phi$ be a solution of the equation
$$
\square \phi + \frac {1}{6} \text{Scal}_{g} \phi = \phi^3.
$$
The following inequalities hold:
$$
\Vert \phi \Vert_{H^1(C^-(p))}^2 + \Vert \phi \Vert^4_{L^4(C^-(p))} \lesssim \left ( \Vert \phi \Vert_{H^1(\Sigma_0)}^2 +    \Vert  T^a\nabla_a \phi\Vert^2_{L^2(\Sigma_0)}  + \Vert  \phi \Vert^4_{L^4(\Sigma_0)}   \right)
$$
and
$$
\Vert \phi \Vert_{H^1(\Sigma_0)}^2 +   \Vert  T^a\nabla_a \phi\Vert^2_{L^2(\Sigma_0)}  + \Vert  \phi \Vert^4_{L^4(\Sigma_0)}  \lesssim\left(\Vert \phi \Vert_{H^1(C^-(p))}^2 + \Vert  \phi \Vert^4_{L^4(C^-(p))} \right).
$$
Furthermore, for all $t$, one has
$$
\Vert \phi \Vert_{H^1(\Sigma_t)}^2 +    \Vert  T^a\nabla_a \phi\Vert^2_{L^2(\Sigma_t)}  + \Vert  \phi \Vert^4_{L^4(\Sigma_t)}   \lesssim \left ( \Vert \phi \Vert_{H^1(\Sigma_0)}^2 +    \Vert  T^a\nabla_a \phi\Vert^2_{L^2(\Sigma_0)}  + \Vert \phi \Vert^4_{L^4(\Sigma_0)}   \right)
$$
\end{proposition}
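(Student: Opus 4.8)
The plan is to derive all three inequalities from a single energy identity built from the stress--energy tensor of the equation, combined with a Gr\"onwall argument; crucially, no Sobolev embedding on the leaves is needed, which is the whole point of the \emph{a priori} character of the statement. First I would introduce the symmetric two--tensor
\[
T_{ab} = \nabla_a\phi\,\nabla_b\phi - \frac{1}{2} g_{ab}\left(\nabla_c\phi\,\nabla^c\phi + \frac{1}{2}\phi^4 - \frac{1}{6}\text{Scal}_g\,\phi^2\right),
\]
whose potential part is tuned so that, \emph{using the field equation} $\square\phi + \frac16\text{Scal}_g\phi = \phi^3$, the divergence $\nabla^a T_{ab}$ reduces to a single lower--order term proportional to $(\nabla_b \text{Scal}_g)\,\phi^2$. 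Contracting with the unit normal $T^a$ of the foliation, the current $J_a = T_{ab}T^b$ has energy density $J_aT^a = \frac12|T(\phi)|^2 + \frac12|D\phi|^2 + \frac14\phi^4 - \frac1{12}\text{Scal}_g\phi^2$ on $\Sigma_t$; since the estimates are local, $U$ having compact closure, the curvature contribution is a bounded multiple of $\phi^2$ and is absorbed. This density is then equivalent to the integrand of $\|\phi\|_{H^1(\Sigma_t)}^2 + \|T^a\nabla_a\phi\|_{L^2(\Sigma_t)}^2 + \|\phi\|_{L^4(\Sigma_t)}^4$, once one supplements it with the elementary zeroth--order estimate controlling $\|\phi\|_{L^2}^2$ via $\tfrac{d}{dt}\|\phi\|_{L^2}^2 = 2\int \phi\, T(\phi)$.

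The crucial computation is the flux through the null cone. Completing $(l,n)$ into the null frame $(l,n,e_1,e_2)$ of the statement, with $T=\frac12(l+n)$ and $g(l,n)$ a negative constant, I would evaluate the flux density $J_b\,l^b = T_{ab}T^a l^b$. Expanding $T=\frac12(l+n)$, using $g(l,l)=0$ and expressing $\nabla^c\phi\,\nabla_c\phi$ in the null frame, the two occurrences of the transverse derivative $\nabla_n\phi$ cancel, leaving
\[
J_b\,l^b = \frac12\left(|\nabla_l\phi|^2 + |\nabla_{\mathbb{S}^2}\phi|^2\right) + \frac14\phi^4 - \frac{1}{12}\text{Scal}_g\,\phi^2.
\]
This is exactly, up to bounded factors and the absorbable curvature term, the integrand of $\|\phi\|_{H^1(C^-(p))}^2 + \|\phi\|_{L^4(C^-(p))}^4$ against the measure $n\lrcorner\,\ud\mu[\hat g]$. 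I expect this cancellation — the disappearance of $\nabla_n\phi$, reflecting that on a characteristic surface only tangential derivatives are controlled — to be the heart of the matter, and the point at which signs and normalisations must be handled with care.

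With these two ingredients I would close the argument by the divergence theorem and Gr\"onwall. Applying $\int_\Omega \nabla^a J_a\,\ud\mu = \int_{\partial\Omega} J$ on the region $\Omega$ between $\Sigma_0$ and a truncation of $C^-(p)$, the bulk term is bounded using $\nabla^a T_{ab}\sim(\nabla\text{Scal}_g)\phi^2$ and $T_{ab}\nabla^aT^b = T_{ab}\pi^{ab}$, where $\pi$ is the deformation tensor of $T$: as $|\pi|$ and the curvature are bounded on $\overline U$, one gets $|\nabla^aJ_a|\lesssim$ energy density, the quartic term being controlled \emph{by the energy itself} rather than by any embedding. Introducing the interpolating quantity $G(s) = E(\Sigma_s\cap\Omega) + \mathrm{Flux}\big(C^-(p)\cap\{0\le \cdot\le s\}\big)$, which equals $E(\Sigma_0)$ at $s=0$ and the full cone flux at the tip, the identity yields $|G(s)-G(0)|\lesssim \int_0^s G(\sigma)\,\ud\sigma$, so Gr\"onwall gives $G(s)\approx G(0)$ throughout; the endpoint comparison is precisely the equivalence of $\|\phi\|_{H^1(\Sigma_0)}^2+\|T^a\nabla_a\phi\|_{L^2(\Sigma_0)}^2+\|\phi\|_{L^4(\Sigma_0)}^4$ with $\|\phi\|_{H^1(C^-(p))}^2+\|\phi\|_{L^4(C^-(p))}^4$, i.e. the first two inequalities at once. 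Running the same identity between the two spacelike slices $\Sigma_0$ and $\Sigma_t$ and applying Gr\"onwall in $t$ gives the third. The last point to be careful about is the regularity at the tip $p$: the use of Stokes' theorem is justified by first taking $\phi$ with compact support away from $p$ — the dense class defining $H^1(C^-(p))$ — and invoking the regularity assumed at the vertex, before passing to the limit.
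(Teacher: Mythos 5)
Your proposal is correct and follows essentially the same route as the paper, which recalls Proposition \ref{prop:aprioricar} from \cite{Joudioux:2012eo} as a consequence of ``standard energy estimates'' and carries out exactly this scheme (stress--energy tensor adapted to the equation, Stokes' theorem between $\Sigma_0$, $\Sigma_t$ and the truncated cone, deformation-tensor error terms, Gr\"onwall in both directions) in the proof of Proposition \ref{charest}. In particular your two key observations --- the cancellation of the transverse derivative $\nabla_n\phi$ in the null flux, and the fact that the quartic term is controlled by the energy itself so that no Sobolev embedding on shrinking leaves is needed --- are precisely the points the paper relies on.
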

\begin{remark} Using Sobolev embedding from $H^1(\Sigma_0)$ into $L^4(\Sigma_0)$ and from $H^1(C^-(p))$ into $L^4(C^-(p))$, one immediately gets the same inequality without the $L^4$ norms.
\end{remark}

\section{Characteristic Cauchy problem \emph{\`a la} H\"ormander}\label{sec:car}

The purpose of this section is to establish an a priori well-posedness result of the characteristic Cauchy problem for the wave equation on a curved space-time based on the result by H\"ormander \cite{MR1073287}, extended in \cite{MR2244222} and partially used in \cite{Joudioux:2012eo} to prove the existence and uniqueness of a weak solution to the characteristic Cauchy problem in $H^1(M)$. More regularity is required when establishing the Lipschitz continuity of the wave operator. The method used by H\"ormander is based on a reduction of the propagation speed of the considered wave equation so that one can resort to the standard existence result of the wave. In the following, one restricts oneself to a light cone, but the result can be extended in the same way to arbitrary weakly lightlike hypersurfaces.

The geometric setting is the following: consider a point $p$ in $M$. One denotes by $C^-(p)$ the light cone from $p$. Consider a time function defined in the interior of $C^-(p)$. The induced foliation is denoted by $\Sigma_t$. One assumes that the slice $\Sigma_0$ is in the past of $p$ and the slice containing $p$ is denoted by $\Sigma_T$. The unit future oriented normal vector to the time slices is denoted by $T^a$. One considers a 3+1 splitting of the metric in the following form:
$$
g =-  N^2 \ud t^2 + h_{\Sigma_t}
$$
where $h_{\Sigma_t}$ is a Riemannian metric on $\Sigma_t$ and the lapse $N$ is given by:
$$
N^2 = g(\nabla t, \nabla t).
$$

The functional setting is given by:
\begin{itemize}
\item on the time slice $\Sigma_t$, one defines the energy of a function $\phi$:
$$
E_\phi(\Sigma_t) = \Vert\phi\Vert^2_{H^1(\Sigma_t)} + \Vert T^a\nabla_a\phi\Vert^2_{L^2(\Sigma_t)};
$$
\item on the light cone $C^-(p)$, the following $H^1$ norm is considered:
$$
\Vert\phi\Vert^2_{H^1(C^-(p))} = \int_{C^-(p)} \left(\nabla_l \phi^2 + |\nabla_{\mathbb{S}^2}\phi|^2+\phi^2\right) T^a\lrcorner \ud \mu[g]
$$
where $l$ is a non-vanishing generator of the null directions of $C^-(p)$ and $T^a\lrcorner \ud \mu[g]$ is the contraction of the space-time volume form with the vector field $T$. The space $H^1(C^-(p))$ is defined as being the completion of the space of smooth functions whose compact support does not contain $p$.
\end{itemize}
\begin{remark} One could have defined, like H\"ormander, $H^1 (C^-(p))$ by transporting the $H^1$ structure of a timelike slice on $C^-(p)$. It happens that the two definitions coincide~\cite{Joudioux:2012eo}.
\end{remark}

Consider the characteristic Cauchy problem
\begin{equation}\label{hor1}\left\{
\begin{array}{l}
\square \phi = \phi^3\\
\phi|_{C^-(p)} = \phi_{0}\in H^1(C^-(p)).
\end{array}\right.
\end{equation}
Note that, for the purpose of the discussion of this section, the scalar curvature term is excluded of the discussion, but can be treated in a similar fashion, exploiting the fact the curvature is bounded.

The purpose of this section is to prove that there exists a strong solution to the characteristic Cauchy problem up to the hypersurface $\Sigma_0$ in the past of $p$. One then restricts oneself to the intersection of the future of $\Sigma_0$ and the past of $p$.

Using Proposition \ref{hor3}, one can then prove the following theorem:
\begin{theorem}\label{thm:existence} The characteristic Cauchy problem
\eqref{hor1} admits a global strong solution down to $\Sigma_0$ in $C^0(\R, H^1(\Sigma_\tau))\cap C^1(\R, L^2(\Sigma_\tau))$. The following a priori estimates furthermore hold:
$$
\forall \tau\in[0,T], E_{\phi}(\Sigma_\tau)\approx E_{\phi}(C^-(p)).
$$
\end{theorem}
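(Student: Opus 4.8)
The plan is to implement H\"ormander's slowing-down argument precisely as announced in the introduction. First I would introduce the family of metrics
$$
g_\lambda = -\lambda N^2 \ud t^2 + h_{\Sigma_t}, \quad \lambda \in [1/2, 1),
$$
and observe that the hypersurface $C^-(p)$, which is null for $g = g_1$, becomes spacelike for every $g_\lambda$ with $\lambda < 1$: the outward null generator $l$ of $C^-(p)$ acquires a strictly negative $g_\lambda$-norm since slowing the time direction tilts the light cones inward. Thus $C^-(p)$ is a legitimate Cauchy hypersurface for the operator $\square_\lambda$ associated with $g_\lambda$. I would then restrict the given characteristic data $\phi_0 \in H^1(C^-(p))$ to this spacelike surface, extract from it a Dirichlet datum and a transverse (normal) datum, and appeal to Proposition \ref{hor3} of Cagnac--Choquet-Bruhat to solve the resulting Cauchy problem for the cubic equation $\square_\lambda \phi_\lambda = \phi_\lambda^3$, obtaining for each $\lambda$ a global solution $\phi_\lambda \in C^0(\R, H^1(\Sigma_\tau)) \cap C^1(\R, L^2(\Sigma_\tau))$ down to $\Sigma_0$.

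The crux of the argument is then a $\lambda$-uniform a priori estimate. I would prove that the energies $E_{\phi_\lambda}(\Sigma_\tau)$ and $\Vert \phi_\lambda \Vert_{H^1(C^-(p))}$ are controlled by the characteristic data uniformly in $\lambda \in [1/2, 1)$. The essential point, which is exactly what makes the method work near timelike infinity, is that the constants appearing in the energy estimates of Proposition \ref{prop:aprioricar}, obtained by integrating the energy-momentum tensor of the cubic equation against the multiplier $T^a$ and applying Stokes' theorem, depend only on geometric quantities (the deformation tensor of $\partial_t$ and finitely many derivatives of the metric) that remain bounded as $\lambda \to 1$, and on the defocusing sign of the nonlinearity, which provides the correct-signed $L^4$ contribution rather than a term to be absorbed. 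I would verify that the flux through $C^-(p)$, which for $g_\lambda$ is a genuine spacelike-slice energy but converges to the characteristic $H^1(C^-(p))$ norm as $\lambda \to 1$, stays comparable to $E_{\phi_\lambda}(\Sigma_0)$ with $\lambda$-independent implied constants.

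With uniform bounds in hand, I would run the compactness argument. The family $\{\phi_\lambda\}$ is bounded in $L^\infty([0,T], H^1(\Sigma_\tau)) \cap W^{1,\infty}([0,T], L^2(\Sigma_\tau))$, so by Banach--Alaoglu a sequence $\lambda_k \to 1$ admits a weak-$*$ limit $\phi$; the Rellich--Kondrachov compact embedding $H^1 \hookrightarrow L^4$ in dimension three upgrades this to strong $L^4$ convergence on each compact time-slab, which is precisely enough to pass to the limit in the cubic term $\phi_{\lambda_k}^3$. Hence $\phi$ solves $\square \phi = \phi^3$ in the distributional sense for the limiting metric $g$, and lower semicontinuity of the norms under weak convergence, combined with the trace continuity that lets one identify $\phi|_{C^-(p)} = \phi_0$, yields the two-sided estimate $E_\phi(\Sigma_\tau) \approx E_\phi(C^-(p))$ claimed in the theorem.

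The main obstacle I anticipate is twofold. First, one must show that the restriction of the limit $\phi$ to $C^-(p)$ genuinely recovers the prescribed datum $\phi_0$ and not some weaker object; this requires a trace theorem adapted to the degenerating geometry and some care with intermediate-derivative traces, which is exactly the technical point the introduction flags as a clarification of \cite{MR2244222} and which I would handle via the trace results of Appendix \ref{app:trace}. Second, and more delicate, is controlling the behaviour of the energy flux as $\lambda \to 1$ near the tip $p$: the multiplier vector field $T^a$ and the transverse null field $n$ degenerate there, so I would need to work with compactly supported data away from $p$, establish the estimates on truncated cones, and then pass to the full cone by the density built into the definition of $H^1(C^-(p))$. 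Uniformity of the Sobolev and energy constants across this truncation, simultaneously with the limit in $\lambda$, is where the argument is most likely to demand the careful bookkeeping that distinguishes this treatment from the earlier one.
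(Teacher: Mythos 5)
Your overall route is the paper's: slow the metric down to $g_\lambda$, solve the now-spacelike Cauchy problem for each $\lambda$ via Proposition \ref{hor3} (the paper simply sets the transverse datum $T^a\nabla_a\phi|_{C^-(p)}=0$, which you should make explicit --- one cannot ``extract'' a normal derivative from $\phi_0$ alone, and the point of the method is that this choice is washed out in the limit), obtain $\lambda$-uniform energy bounds, and extract a limit by weak-$*$ compactness plus Rellich--Kondrachov to pass to the limit in the cubic term. Up to that point your proposal matches the paper step for step, including the identification of the trace on $C^-(p)$ by compactness of the trace operator.

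The genuine gap is that your argument stops at a distributional solution lying in $L^\infty([0,T],H^1(\Sigma_\tau))$ with $\partial_\tau\phi\in L^\infty([0,T],L^2(\Sigma_\tau))$, whereas the theorem asserts a \emph{strong} solution in $C^0([0,T],H^1(\Sigma_\tau))\cap C^1([0,T],L^2(\Sigma_\tau))$; weak-$*$ limits give no continuity in time, and nothing in your outline produces it. The paper devotes the entire second half of the proof to this upgrade: one writes $\square-\frac{1}{N^2}\partial_\tau^2$ as a combination of purely spatial operators to deduce $\partial_\tau^2\phi\in L^\infty([0,T],H^{-1}(\Sigma_\tau))$, applies Lions' intermediate-derivative and trace theorems (Appendix \ref{app:trace}) to the triple $\bigl(\phi,\partial_\tau\phi,\partial_\tau^2\phi\bigr)$ to obtain $\phi\in C^1([0,T],L^2(\Sigma_\tau))$, then combines $\phi\in C^0([0,T],w\text{-}H^1(\Sigma_\tau))$ with the local Lipschitz continuity of the energy $\tau\mapsto E_\phi(\Sigma_\tau)$ to conclude $\phi\in C^0([0,T],H^1(\Sigma_\tau))$. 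You invoke the Appendix only for the trace on the cone, which is not where it is needed. Relatedly, your claim that lower semicontinuity of norms under weak convergence ``yields the two-sided estimate'' $E_\phi(\Sigma_\tau)\approx E_\phi(C^-(p))$ is not correct: lower semicontinuity gives only one inequality; the equivalence is obtained a posteriori by applying the a priori estimates of Proposition \ref{prop:aprioricar} to the solution once it is known to lie in $C^0([0,T],H^1(\Sigma_\tau))$.
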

\begin{proof} Assuming that there exists a solution in $C^0(\R, H^1(\Sigma_\tau))$, the a priori estimates are a consequence of in \cite[Proposition 4.15]{Joudioux:2012eo}.

The proof of the theorem will require at a point the use of Sobolev embeddings. This requires to work with an extension of the foliation $(\Sigma_\tau)$ to a cylinder. One then considers a smooth isometric embedding of $(J^-(p)\cap J^+(\Sigma_0),g)$ into a compact cylinder $(U, g)$. The foliation $(\Sigma_\tau)$ is extended on the cylinder $U$ in a spacelike foliation of $U$ for the extension of the metric $g$. We still denote by $(\Sigma_\tau)$ this extension; all the leaves of this foliation are now topological 3-spheres endowed with a Riemannian metric, and as a consequence, all the Sobolev spaces $H^k(\Sigma_\tau)$ considered on the leaf $\Sigma_\tau$ are equivalent since the leaf is compact.

The proof of the well-posedness relies on the method introduced by H\"ormander \cite{MR1073287} which consists in slowing down the propagation speed of the waves. A detailed proof of H\"ormander's paper is given in\cite{MR2244222} when the metric is only Lipschitz. The following proof follows step by step this paper (and especially the scheme given in the proof of theorem 4.3), modifying it when necessary.

The reduction of the propagation speed is realised by introducing a parameter $\lambda$ in $[\frac12, 1]$ and a family of metric $g_\lambda$:
$$
g_{\lambda} = -\lambda^2 N^2 \ud t^2 +h_{\Sigma_t}
$$
so that the hypersurface $C^-(p)$ becomes spacelike for the metric $g_\lambda$.

According to Proposition \ref{hor3}, for any $\lambda$ in $[\frac12, 1]$, the Cauchy problem
\begin{equation*}\left\{
\begin{array}{l}
\square_\lambda \phi = \phi^3\\
\phi|_{C^-(p)} = \phi_{0}\in H^1(C^-(p))\\
T^a\nabla_a \phi|_{C^-(p)} =0 \in L^2(C^-(p))
\end{array}\right.
\end{equation*}
admits a solution $\phi_\lambda$ down to $\Sigma_0$ in $C^1([0,T], L^2(\Sigma_\tau))\cap C^0([0,T], H^1(\Sigma_\tau))$ such that:
$$
E_{\phi_{\lambda}} \leq c_{\lambda} E_{\phi_\lambda}(C^-(p)) = \Vert\phi_0\Vert_{H^1_{\lambda}(C^-(p))}\lesssim C \Vert\phi_0\Vert_{H^1(C^-(p))}.
$$
where $c_\lambda$ is a constant which depends continuously on the scalar curvature of $g_\lambda$. Since the interval under consideration is compact and since, as a consequence of the previous remark, $c_\lambda$ depends continuously on $\lambda$, one can replace $c_\lambda$ by its supremum over $[\frac12, 1]$. Furthermore, $C^-(p)$ is compact and, as consequence, all the Sobolev spaces associated with a smooth metric are equivalent. This includes $H^1(C^-(p))$. The family $(\phi_\lambda)_{\lambda\in [\frac12, 1]}$ is then uniformly bounded in $L^\infty([0,T], H^1(\Sigma_\tau))$:
$$
E_{\phi_{\lambda}} \lesssim \Vert\phi_0\Vert_{H^1(C^-(p))}
$$

Consider now a sequence $(\lambda_n)$ converging to $1$ and one denotes by $(\phi_n)$ the associated sequence. One denotes by $U$ the volume delimited by $\Sigma_0$ and $C^-(p)$.
\begin{remark}\label{hor4}
Before studying the convergence process, let us remind that, using a priori estimates of Proposition \ref{prop:aprioricar}, the energy on the initial time slice $\Sigma_0$ controls all the energies on the time slices $\Sigma\tau$ for $\tau\in [0,T]$. As a consequence, a convergence stated for $H^1(\Sigma_0)$ actually holds in $L^\infty([0,T], H^1(\Sigma_\tau))$.
\end{remark}

One now proceeds by extracting successively sequences of $(\lambda_n)$ as follows:
\begin{enumerate}[wide, labelwidth=!, labelindent=0pt]
\item Since $(\phi_n)$ is bounded in $C^1([0,T], L^2(\Sigma_\tau))\cap C^0([0,T], H^1(\Sigma_\tau))$, $(\phi_n)$ is bounded in $H^1(U)$. Hence, using Kakutani's theorem, there exists a sub-sequence of $(\phi_n)$ converging weakly in $H^1(U)$ towards a function $\phi$ in $H^1(U)$. Furthermore, using Rellich-Kondrachov's theorem, since $H^1(U)$ is compactly embedded in $H^\sigma(U)$ for $\sigma<1$, a diagonal extraction process gives:
\begin{eqnarray}
\phi_n&\stackrel{w - H^1(U)}{\xrightarrow{\hspace*{3cm}}}& \phi\label{hor5}\\
\phi_n&\stackrel{H^\sigma(U)}{\xrightarrow{\hspace*{3cm}}}& \phi\label{hor6}.
\end{eqnarray}
\item Since $(\phi_n)$ is bounded in $L^\infty([0,T], H^1(\Sigma_\tau))$, accordingly to Remark \ref{hor4}, up to an extraction, using Kakutani's theorem, one has :
\begin{eqnarray}
\phi_n&\stackrel{w - L^\infty([0,T],H^1(\Sigma_\tau))}{\xrightarrow{\hspace*{3cm}}}& \phi\label{hor7}
\end{eqnarray}
\item Furthermore, since $(\phi_n)$ converges strongly in $H^{\frac12}(U)$, by continuity of the trace operator from $H^{\frac12}(U)$ into $L^2(\Sigma_0)$ and using Remark \ref{hor4}, one gets that
\begin{equation}
\phi_n \stackrel{C^0([0,T], L^2(\Sigma_\tau))}{\xrightarrow{\hspace*{3cm}}} \phi\label{hor8}.
\end{equation}
We have also used that $\phi_n$ lies for all $n$ in $C^0([0,T], L^2(\Sigma_\tau))$ which is closed in $L^\infty([0,T], L^2(\Sigma_\tau))$.
\item Furthermore, $(\phi_k)$ and $(\partial_\tau \phi_k)$ are both bounded respectively in $L^\infty([0,T], H^1(\Sigma_\tau))$ and $L^\infty([0,T],L^2(\Sigma_\tau))$. Using Banach-Alaoglu-Bourbaki theorem, up to extractions, one has
\begin{eqnarray}
\phi_n&\stackrel{\star-w - L^\infty([0,T], H^1(\Sigma_\tau))}{\xrightarrow{\hspace*{3cm}}}& \phi\label{hor9}\\
\phi_n&\stackrel{\star-w-L^\infty([0,T],L^2(\Sigma_\tau))}{\xrightarrow{\hspace*{3cm}}}& \phi.\label{hor10}
\end{eqnarray}
\item Since the foliation $\{\Sigma_\tau\}$ has the volume of its leaves bounded away from $0$, Sobolev embeddings can be realized uniformly over the foliation. Using these Sobolev embeddings, Remark \ref{hor4} and the convergence \eqref{hor7}, the following convergence holds
\begin{equation}
\phi_n \stackrel{w-L^\infty([0,T], L^2\cap L^6(\Sigma_\tau))}{\xrightarrow{\hspace*{3cm}}} \phi\label{hor11}\\
\end{equation}
and, as a consequence,
\begin{equation}
\phi_n^3 \stackrel{w-L^\infty([0,T], L^2(\Sigma_\tau))}{\xrightarrow{\hspace*{3cm}}} \phi^3\label{hor12}.
\end{equation}
\item  Furthermore, since $g_\lambda$ is smooth, $\square_\lambda \phi_\lambda$ also converges in the sense of distributions towards $\square \phi$. Using the convergence \eqref{hor12}, $(\phi_n^3)$ converges towards $\phi^3$ in the sense of distributions. This means that the function $\phi$ satisfies, in the sense of distributions,
$$
\square \phi  = \phi^3.
$$
\item It remains to prove that $\phi$ satisfies the initial conditions.  This is a direct consequence of Equation \eqref{hor8}:
$$
\phi|_{C^-(p)} = \phi_0.
$$
\end{enumerate}

The next step of the proof consists in proving that the solution is in fact a strong solution of the equation:
\begin{enumerate}[wide, labelwidth=!, labelindent=0pt]
\item Since we are working on a compact space, the trace operator is in fact compact from $H^1(U)$ in $L^2(C^-(p))$. Up to an extraction, $(\phi_n)$ converges then strongly in $L^2(C^-(p))$. As a consequence, $\phi|_{C^-(p)}$ is equal to $\phi_0$ in $L^2(C^-(p))$.
\item One already has, as a consequence of the a priori estimates,
\begin{eqnarray*}
\phi &\in & L^\infty([0,T],H^1(\Sigma_\tau))\\
\phi &\in & C^0([0,T],L^2(\Sigma_\tau))\\
\partial_\tau\phi &\in & L^\infty([0,T],L^2(\Sigma_\tau)).
\end{eqnarray*}
\item One then considers the operator
$$
L_0=\square_\lambda - \frac{1}{N^2}\partial_\tau^2 = \alpha \partial_t\times L_1 + L_2.
$$
where $\alpha$ is a smooth function on $U$, $L_1$ is a first order purely spatial operator and $L_2$ is a second order purely spatial operator. It is clear from this decomposition that, if $\phi$ lies in $L^\infty([0,T], H^1(\Sigma_\tau))$ and $\partial_\tau \phi$ in $L^\infty([0,T], L^2(\Sigma_\tau))$, then $L_0\phi$ lies $L^\infty([0,T], H^{-1}(\Sigma_\tau))$ since $H^1(\Sigma_\tau)$ and $L^2(\Sigma_\tau)$ embed themselves continuously and uniformly over the foliation in $H^{-1}(\Sigma_\tau)$.
 \item One finally gets, using the principle of intermediate derivatives of Lions \cite{Lions:1963tu}, as described in Appendix \ref{tracelm}, since
\begin{eqnarray*}
 \phi&\in&L^\infty([0,T], H^1(\Sigma_\tau))\\
  \partial_\tau\phi &\in& L^\infty([0,T], L^2(\Sigma_\tau))\\
 \partial_\tau^2\phi &\in& L^\infty([0,T], H^{-1}(\Sigma_\tau)),
\end{eqnarray*}
and since the energy is continuous, one gets
\begin{eqnarray}
\phi &\in &C^0([0,T], L^2(\Sigma_\tau)) \nonumber\\
\partial_\tau \phi &\in & C^0([0,T], L^2(\Sigma_\tau)) \label{hor13}
\end{eqnarray}
that is to say that \(\phi \in C^1([0,T], L^2(\Sigma_\tau))\).
\item It remains to prove that the function $\phi$ belongs to $C^0([0,T], H^1(\Sigma_\tau))$. This is done as follows:
\begin{enumerate}[wide, labelwidth=!, labelindent=0pt]
\item The weak convergence \eqref{hor7} of $(\phi_n)$ in $L^\infty([0,T], H^1(\Sigma_\tau))$ and the strong convergence \eqref{hor8} of $(\phi_n)$ in $C^0([0,T], L^2(\Sigma_\tau))$ imply that
$$
\forall t\in[0,T], \phi_n(t, \star)\in H^1(\Sigma_\tau).
$$
\item The a priori estimates and Equation \eqref{hor13} give:
\begin{eqnarray*}
\phi &\in & L^\infty([0,T], H^1(\Sigma_\tau))\\
\partial_\tau \phi & \in & C^0([0,T], L^2(\Sigma_\tau))
\end{eqnarray*}
As a consequence (this fact is proved in \cite[p. 535]{MR2244222}), we have:
$$
\phi\in C^0([0,T], w-H^1(\Sigma_\tau)).
$$
In this context, on can prove the a priori estimates over the foliation $\Sigma_\tau$:
$$
|E_\phi(\Sigma_\tau)-E_\phi(\Sigma_\mu)|\leq C\left(\int_{[\tau;\mu]}E_\phi(\Sigma_r)\ud r\right).
$$
The energy is then locally Lipschitz continuous. As a consequence, since $\phi$ is in $C^0([0,T], w-H^1(\Sigma_\tau))$, $\phi$ is also in $C^0([0,T], H^1(\Sigma_\tau))$.
\end{enumerate}
\end{enumerate}
\end{proof}

\section{Estimates for the characteristic Cauchy problem}\label{sec:uniqueness}
In the previous section, it has been proved that the Cauchy problem admits a local solution to the characteristic Cauchy problem, relying only on the existence of a priori estimates for the solutions. The uniqueness has, so far, not be proved yet, as well as the continuity in the initial data. This can be achieved in various ways. The path we chose to follow is based on the work of Baez-Segal-Zhou \cite{MR1073286} and relies on an a priori existence result of solutions to the Cauchy problem, which has been handled in a specific way separately in section \ref{sec:car}. The estimates are established by relying on a reduction to the Cauchy problem.

The setting is both the framework introduced by H\"ormander and Baez-Segal-Zhou. Hence, one considers the following geometric context:
\begin{itemize}
\item Let $p$ a point in $M$.
\item One considers an open geodesically convex hyperbolic neighbourhood $\Omega$ of $p$.
\item Let $C^-(p)$ be the past light cone from $p$.
\item Let $t$ be a time function on $\Omega$; the time foliation arising from it is denoted by $\{S_t\}$. One assumes that $S_0\cap J^-(p)$ is entirely on $\Omega$. $S_T$ denotes the time slice containing $p$.
\item Let $T^a$ be a unit normal vector field to the foliation $S_t$ and considers the flow generated by $T$. The hypersurface $\Sigma_T$ is transported by the flow generated by $T^a$ up to $p$. This creates a foliation from $S_T$, which is denoted by $\{\Sigma_t\}$. Up to a rescaling and a shift, one can assume that the slice $\Sigma_0$ contains $p$.
\item The closure of the manifold which is obtained is now topologically isomorphic to $\Sigma_T\times[T, 0]$.
\end{itemize}
\begin{remark} The setting introduced by H\"ormander considers only space-times of the form $X\times \R$, where $X$ is a compact manifold, a priori without boundary. The result can nonetheless be immediately extended to the case when the manifold $X$ has a boundary. This can be seen using the following remark: the energy space remains $H^1(X)$ (and not $\dot{H}{}^1(X)$). Assume that the manifold with boundary $\overline{X}$ is embedded in a bigger compact manifold without boundary (which can always be done). If the boundary of $X$ is at least $C^1$, the space $H^1(X)$ extends continuously into $H^1(\overline{X})$. As a consequence, all energy estimates involving $\overline{X}$ can be brought back onto $X$. In this specific case, the boundary of $\Sigma_T$ is the intersection of $\Sigma_T$ with $J^-(p)$ and is, as a consequence, smooth.
\end{remark}

One introduces first the following operator:
$$
\mathfrak{T} : H^1(\Sigma_T)\times L^2(\Sigma_T) \rightarrow H^1(C_T)
$$
which associates to initial data $(\phi, \psi)$ the trace over $C_{T}$ of the solution of the \underline{linear} wave equation with initial data $(\phi, \psi)$ on $\Sigma_T$. Following H\"ormander \cite{MR1073287}, this operator as the following properties:
\begin{theorem}[H\"ormander] The linear operator $\mathfrak{T}$ is one-one, onto and bi-continuous, that is to say that there exists a constant $C$ depending only on the geometry of the manifold such that, for all $(\phi, \psi)$ in $H^1(\Sigma_T)\times L^2(\Sigma_T)$:
$$
\Vert\mathfrak{T}(\phi, \psi)\Vert_{H^1(C_T)} \lesssim \Vert(\phi, \psi)\Vert_{H^1(\Sigma_T)\times L^2(\Sigma_T)}
$$
and
$$
\Vert(\phi, \psi)\Vert_{H^1(\Sigma_T)\times L^2(\Sigma_T)} \lesssim  \Vert\mathfrak{T}(\phi, \psi)\Vert_{H^1(C_T)}.
$$
\end{theorem}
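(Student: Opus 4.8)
The theorem attributed to Hörmander asserts that the linear trace operator
\[
\mathfrak{T} : H^1(\Sigma_T)\times L^2(\Sigma_T) \rightarrow H^1(C_T),
\]
sending Cauchy data on $\Sigma_T$ to the trace on the characteristic cone $C_T$ of the solution of the \emph{linear} wave equation, is a bounded bijection with bounded inverse. Let me sketch how I would establish this.

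**Overall strategy.** The plan is to prove the two inequalities separately, since together they give injectivity and the continuity of $\mathfrak{T}^{-1}$, and then to obtain surjectivity from the already-established existence result for the characteristic Cauchy problem (Theorem \ref{thm:existence}) specialised to the linear equation. For the forward estimate $\Vert\mathfrak{T}(\phi,\psi)\Vert_{H^1(C_T)} \lesssim \Vert(\phi,\psi)\Vert_{H^1(\Sigma_T)\times L^2(\Sigma_T)}$, I would run a standard energy argument: take the solution $u$ of the linear Cauchy problem with data $(\phi,\psi)$, whose existence and energy bound come from Proposition \ref{hor3} (linear case), and integrate the energy identity for $\square u = 0$ over the region bounded by $\Sigma_T$ and $C_T$. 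Applying the divergence theorem to the energy-momentum tensor contracted with the timelike vector field $T^a$, the boundary term on $C_T$ is precisely (up to bounded geometric factors) the cone norm $\Vert u\Vert^2_{H^1(C_T)}$, in which the characteristic structure makes the transverse derivative drop out, leaving only $\nabla_l u$, $\nabla_{\mathbb S^2}u$ and $u$; this is exactly the content already recorded in Proposition \ref{prop:aprioricar}. The spacetime bulk term is controlled by Grönwall since the metric is smooth on the compact region.

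**The reverse estimate.** The inequality $\Vert(\phi,\psi)\Vert_{H^1(\Sigma_T)\times L^2(\Sigma_T)} \lesssim \Vert\mathfrak{T}(\phi,\psi)\Vert_{H^1(C_T)}$ is the substantive direction and I expect it to be the main obstacle. Here one cannot simply integrate forward from $\Sigma_T$, since the data to be controlled lives on the spacelike slice while the quantity controlling it lives on the null cone. The key is that Theorem \ref{thm:existence} already solves the characteristic Cauchy problem with data on $C^-(p)$ \emph{down to} $\Sigma_0$, and supplies the two-sided a priori estimate $E_\phi(\Sigma_\tau)\approx E_\phi(C^-(p))$ for all $\tau$. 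Reading that equivalence at $\tau$ corresponding to $\Sigma_T$ gives $E_u(\Sigma_T)\lesssim \Vert u\Vert^2_{H^1(C_T)}$, which is exactly the reverse estimate once one identifies $E_u(\Sigma_T)=\Vert(\phi,\psi)\Vert^2_{H^1(\Sigma_T)\times L^2(\Sigma_T)}$. Thus the reverse bound is not proved afresh but is \emph{inherited} from the Hörmander slowing-down construction; the role of the present statement is to package that construction, restricted to the linear equation ($\phi^3\equiv 0$), as a bicontinuity statement for $\mathfrak{T}$.

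**Bijectivity and conclusion.** Injectivity of $\mathfrak{T}$ follows immediately from the reverse estimate: if the trace vanishes then the data vanishes. For surjectivity, given $w\in H^1(C_T)$ I would solve the linear characteristic Cauchy problem with data $w$ on $C_T$, again via Theorem \ref{thm:existence} in the linear case, obtaining a solution whose Cauchy data $(\phi,\psi)$ on $\Sigma_T$ satisfies $\mathfrak{T}(\phi,\psi)=w$ by construction; the a priori estimate guarantees $(\phi,\psi)\in H^1(\Sigma_T)\times L^2(\Sigma_T)$. Linearity of everything is used to promote the two inequalities from a bound on norms to genuine bicontinuity of the linear map. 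The only points requiring care are the geometric reduction—ensuring that the foliation $\{\Sigma_t\}$ reaching up to $p$ and the compact region $J^-(p)\cap J^+(\Sigma_0)$ are set up so that the energy identities have no uncontrolled boundary contributions at the tip $p$—which is handled by the compact-support-away-from-$p$ convention built into the definition of $H^1(C^-(p))$, and the verification that the geometric constants in $\lesssim$ are uniform, which holds because the whole construction takes place on a fixed compact neighbourhood $\Omega$ with smooth metric.
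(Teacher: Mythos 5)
The paper offers no proof of this theorem---it is quoted directly from H\"ormander's article \cite{MR1073287}---so there is no in-paper argument to compare against; your reconstruction (forward energy estimate from $\Sigma_T$ to $C_T$ via Stokes' theorem applied to the stress-energy tensor plus Gr\"onwall, reverse estimate inherited from the two-sided a priori bounds of the type recorded in Proposition~\ref{prop:aprioricar} and Theorem~\ref{thm:existence}, and surjectivity from the linear case of the characteristic existence result) is the standard argument and coincides with the linear specialisation of the machinery the paper itself develops for the cubic equation. I see no gap.
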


The remark of Baez, Segal and Zhou \cite[Proof of Theorems 13 and 16]{MR1073286} is then the following:
\begin{lemma}
Let $\delta_0$ be an initial data set in $H^1(C_T)$. Let $H$ be a function defined over the foliation $\Sigma_T$ in $C^1([0, T], L^2(\Sigma_t))\cap C^0([0, T], H^1(\Sigma_t))$. Then $\phi$ can be extended to the past of $\Sigma_T$ by solving the Cauchy problem on $\Sigma_T$ with initial data $\mathfrak{T}^{-1}(\delta_0)$ for the equation:
$$
\square \delta + \textbf{1}_{J^-(C_T)} H^2 \delta=0
$$
where the function $H$ is extended by 0 outside $J^-(C_T)$.
\end{lemma}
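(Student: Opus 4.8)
The plan is to regard the statement as a reduction of a \emph{characteristic} problem to a standard (non-characteristic) Cauchy problem, and to read the equation $\square\delta + \mathbf{1}_{J^-(C_T)}H^2\delta = 0$ as a \emph{linear} equation in $\delta$ with the prescribed coefficient $V := \mathbf{1}_{J^-(C_T)}H^2$. First I would invoke the theorem of H\"ormander recalled just above: since $\delta_0 \in H^1(C_T)$, the pair $(\phi_\star,\psi_\star) := \mathfrak{T}^{-1}(\delta_0)$ is a well-defined element of $H^1(\Sigma_T)\times L^2(\Sigma_T)$, and I take it as Cauchy data on the spacelike slice $\Sigma_T$. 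The only remaining task is then to solve, to the past of $\Sigma_T$, the linear Cauchy problem for this equation, which no longer involves the cone $C_T$ as a carrier of data.

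Next I would control the coefficient and run the energy method. Because in this section the foliation $\{\Sigma_t\}$ is the transported one, whose leaves are compact with volumes bounded away from zero, the Sobolev embedding $H^1(\Sigma_t)\hookrightarrow L^6(\Sigma_t)$ holds uniformly in $t$; from $H\in C^0([0,T],H^1(\Sigma_t))$ one obtains $H^2\in C^0([0,T],L^3(\Sigma_t))$, and the indicator only restricts the support, so $V\in L^\infty([0,T],L^3(\Sigma_t))$. Multiplying the equation by $T^a\nabla_a\delta$ and integrating over $\Sigma_t$, the only non-standard contribution is the potential term, which I would estimate directly --- without integrating by parts in time, precisely because $V$ is not differentiable --- by
$$\left|\int_{\Sigma_t} V\,\delta\,T^a\nabla_a\delta\,\ud\mu[h_{\Sigma_t}]\right| \leq \|V\|_{L^3(\Sigma_t)}\|\delta\|_{L^6(\Sigma_t)}\|T^a\nabla_a\delta\|_{L^2(\Sigma_t)} \lesssim \|V\|_{L^3(\Sigma_t)}\,E_\delta(\Sigma_t).$$
This yields $\partial_t E_\delta(\Sigma_t)\lesssim \|V\|_{L^3(\Sigma_t)}\,E_\delta(\Sigma_t)$, and a Gr\"onwall argument over the compact interval bounds $E_\delta(\Sigma_t)$ in terms of $E_\delta(\Sigma_T)$. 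Existence and uniqueness of a solution $\delta\in C^0([0,T],H^1(\Sigma_t))\cap C^1([0,T],L^2(\Sigma_t))$ then follow from the standard theory of linear hyperbolic equations (a Galerkin or Duhamel scheme closed by this a priori estimate), exactly as in Proposition \ref{hor3} but with the fixed linear potential in place of the nonlinearity.

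I expect the main obstacle to be precisely the low regularity of the potential: $V$ is merely bounded in time with values in $L^3(\Sigma_t)$ and is discontinuous across $\partial J^-(C_T)$, so the energy estimate cannot be closed by any pointwise bound nor by the usual time-integration by parts, but only through the Sobolev embedding $H^1\hookrightarrow L^6$. What makes this work uniformly --- and what distinguishes the present situation from the shrinking foliation that forced the artificial decay assumption on $b$ in the earlier work --- is the non-degeneracy of the transported foliation, whose leaves do not collapse to a point. Finally I would note that extending $H$ by zero outside $J^-(C_T)$, i.e.\ the solid past cone of $p$, localises the potential to the region where the eventual solution is supported; this is what makes the construction consistent with prescribing $\delta_0$ on $C_T$ and sets up the comparison with $\mathfrak{T}$ used in the subsequent estimates.
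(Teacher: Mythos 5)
Your proposal correctly isolates and handles the two analytic ingredients that the paper itself relies on: the invertibility of the linear trace operator $\mathfrak{T}$ (H\"ormander's theorem, quoted just above the lemma) to convert $\delta_0\in H^1(C_T)$ into admissible Cauchy data on $\Sigma_T$, and the closing of the energy estimate for the rough potential $V=\mathbf{1}_{J^-(C_T)}H^2$ via the uniform Sobolev embedding $H^1(\Sigma_t)\hookrightarrow L^6(\Sigma_t)$ on the non-degenerate foliation plus Gr\"onwall. Your three-factor H\"older bound $\Vert V\Vert_{L^3}\Vert\delta\Vert_{L^6}\Vert T^a\nabla_a\delta\Vert_{L^2}$ is the same estimate as the paper's splitting $\int H^4\delta^2\leq(\int H^6)^{2/3}(\int\delta^6)^{1/3}$ in the proof of Proposition \ref{charest}; note that the paper does not prove the lemma at all but quotes it from Baez--Segal--Zhou and defers the estimates to Proposition \ref{charest}, so on this part you are doing more, and doing it consistently with the paper.

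The gap is in the final step, which is the actual content of the lemma: you must show that the solution of the Cauchy problem launched from $\Sigma_T$ with data $\mathfrak{T}^{-1}(\delta_0)$ is an \emph{extension} of the characteristic problem, i.e.\ that its trace on $C_T$ is $\delta_0$ (equivalently, that it coincides on $J^-(C_T)$ with the difference $\delta=u-v$ one started from). Your justification --- that the cut-off ``localises the potential to the region where the eventual solution is supported'' --- is not correct: the data $\mathfrak{T}^{-1}(\delta_0)$ on $\Sigma_T$ are in no way supported in $J^-(C_T)\cap\Sigma_T$, and the constructed solution lives on the whole slab, not in $J^-(C_T)$. The role of the indicator is different: it makes the modified equation coincide with the \emph{free} linear wave equation outside $J^-(C_T)$, which is what allows the linear operator $\mathfrak{T}$ (defined through the free equation) to be compared with the evolution of the cut-off equation, and a uniqueness/domain-of-dependence argument inside $J^-(C_T)$ is then needed to identify the constructed solution with the original $\delta$ there. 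Without this identification the lemma gives you \emph{a} solution of \emph{an} equation with no relation to $\delta_0$ on the cone, and the transfer of energy between $C_T$ and $\Sigma_T$ used in Proposition \ref{charest2} does not follow. This compatibility step is exactly what the paper outsources to Baez--Segal--Zhou, so it is the one point your write-up would need to supply or cite precisely.
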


For such an equation, the energy estimates are simple to obtain, since the the foliation of reference to establish them have non vanishing volume. For the sake of consistency, these estimates are nonetheless proved here:
\begin{proposition}\label{charest} The following energy estimates hold for $\delta$: there exists an increasing function $C: \R^+\rightarrow \R^+$ such that:
$$
E_\delta(C_T)\leq C (\sup_{[0,T]} E^2_H(\Sigma_t)) E_\delta(\Sigma_T)
$$
and
$$
 E_\delta(\Sigma_T) \leq C \sup_{[0,T]} E^2_H(\Sigma_t))E_\delta(C_T),
$$
where:
\begin{itemize}
 \item the energy on $C_T$ is taken to be the squared $H^1$-norm of the function on $C_T$:
$$
E_\delta(C_T) = \Vert \delta\Vert^2_{H^1(C_T)}
$$
\item the energy on the time slice $\Sigma_T$ is taken to be the standard energy:
$$
E_\delta(\Sigma_T) = \Vert \delta\Vert^2_{H^1(\Sigma_T)}+ \Vert T^a\nabla_a \delta\Vert^2_{L^2(\Sigma_T)}.
$$
\end{itemize}

\end{proposition}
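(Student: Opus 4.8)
The plan is to run exactly the vector-field (energy current) argument already used for Proposition~\ref{prop:aprioricar}, the only change being that the cubic nonlinearity $\phi^3$ is replaced by the \emph{linear} potential term $\mathbf{1}_{J^-(C_T)}H^2\delta$. The reason this estimate is genuinely easier is that the reference foliation $\{\Sigma_t\}$ used here is the topologically product foliation of $J^-(p)\cap J^+(\Sigma_T)$ obtained by transporting $\Sigma_T$ along the flow of $T^a$, so its leaves have volume bounded away from zero. Consequently the Sobolev embedding $H^1(\Sigma_t)\hookrightarrow L^6(\Sigma_t)$ holds with a constant uniform in $t$. This uniformity is precisely what was unavailable for the shrinking foliation near the tip $p$, and it is what renders the potential term harmless.

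Concretely, I would form the stress-energy tensor $T_{ab}[\delta]$ of $\delta$ for the wave operator and contract it with $T^a$ to build the energy current $J^a = T^{ab}[\delta]\,T_b$. Using the equation $\square\delta = -\mathbf{1}_{J^-(C_T)}H^2\delta$ the divergence is
$$
\nabla_a J^a = T^{ab}[\delta]\,\nabla_{(a}T_{b)} - \mathbf{1}_{J^-(C_T)}\,H^2\,\delta\,T^a\nabla_a\delta .
$$
Integrating over the solid cone $J^-(p)\cap J^+(\Sigma_T)$ and applying Stokes' theorem, the boundary flux through $\Sigma_T$ reproduces $E_\delta(\Sigma_T)$, while the flux through the null cone $C_T$ reproduces the first-order part of $\Vert\delta\Vert^2_{H^1(C_T)}$ (the $\nabla_l\delta$ and $\nabla_{\mathbb{S}^2}\delta$ contributions); this flux is non-negative because $T^a$ is timelike and the generators of $C_T$ are null. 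The zeroth-order term $\Vert\delta\Vert^2_{L^2(C_T)}$ is recovered by the same lower-order (fundamental-theorem-of-calculus / Poincar\'e) argument as in Proposition~\ref{prop:aprioricar}.

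It then remains to absorb the bulk integral $\int\nabla_a J^a$. The deformation-tensor contribution $T^{ab}[\delta]\nabla_{(a}T_{b)}$ is bounded pointwise by the energy density, hence by $E_\delta(\Sigma_t)$ after integration on each leaf, with a constant depending only on the geometry. For the potential contribution, on each leaf I would estimate, by H\"older and the \emph{uniform} Sobolev embedding,
$$
\left|\int_{\Sigma_t}\mathbf{1}_{J^-(C_T)}\,H^2\,\delta\,T^a\nabla_a\delta\right|
\le \Vert H\Vert^2_{L^6(\Sigma_t)}\,\Vert\delta\Vert_{L^6(\Sigma_t)}\,\Vert T^a\nabla_a\delta\Vert_{L^2(\Sigma_t)}
\lesssim E_H(\Sigma_t)\,E_\delta(\Sigma_t),
$$
where discarding the indicator and passing to the full leaf $\Sigma_t$ is exactly what the non-vanishing volume permits. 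Feeding this into the flux identity, foliating the solid cone by the leaves $\Sigma_t$, and applying Gr\"onwall's lemma with the running energy $E_\delta(\Sigma_t)$ yields both inequalities, with a constant of the form $C\big(\sup_{[0,T]}E_H^2(\Sigma_t)\big)$ for some increasing $C$ (the precise power of $E_H$ is immaterial, since $C$ is an arbitrary increasing function).

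The step I expect to be most delicate is organising the Gr\"onwall argument so that it closes in \emph{both} directions. Controlling $E_\delta(C_T)$ by $E_\delta(\Sigma_T)$ is the natural direction; the reverse bound is obtained by running the same foliation argument from the tip $p$, where the truncated energies vanish, and exploiting the positivity of the null flux so that the partial slice energy and the partial cone flux are simultaneously dominated by a single Gr\"onwall functional. Once the potential term is tamed by the uniform Sobolev constant, the remaining work is the same book-keeping already performed for the genuinely nonlinear estimate of Proposition~\ref{prop:aprioricar}.
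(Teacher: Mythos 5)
Your proposal follows essentially the same route as the paper: the stress-energy tensor contracted with $T^a$, Stokes' theorem between the cone and the leaves of the product foliation, the uniform Sobolev embedding $H^1(\Sigma_t)\hookrightarrow L^6(\Sigma_t)$ (valid because the leaf volumes are bounded away from zero) combined with H\"older's inequality to tame the potential term, and Gr\"onwall's lemma to close in both directions. The only differences are cosmetic --- the paper splits the error term as $\int H^4\delta^2$ before applying H\"older, yielding $\sup E_H^2$ rather than your $\sup E_H$, which is immaterial since $C$ is an arbitrary increasing function --- so your argument is correct and matches the paper's.
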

\begin{proof} Since the techniques under considerations are standard, the proof is only sketched. Consider the stress energy tensor:
$$
T_{ab}=\nabla_a \delta \nabla_b \delta +g_{ab}\left(-\frac12 \nabla_c\delta\nabla^c\delta + \frac{\delta^2}{2}\right)
$$ whose error term is given by:
$$
\nabla^a(T^bT_{ab}) = \nabla^{(a}T^{b)}T_{ab} - T^a\nabla_a\delta H^2 \delta.
$$
Applying Stokes theorem between $\Sigma_T$ and $\Sigma_s$, one gets immediately the following inequality:
$$
\left|E_{\delta}(\Sigma_T) - E_\delta(\Sigma_s)\right| \lesssim \int_s^TE_{\delta}(\Sigma_\tau) \ud \tau + \int_{s}^T\int_{\Sigma_\tau} H^4\delta^2 \ud\mu_{\Sigma_\tau}\ud \tau,
$$
One deals with the non linear term using Sobolev embeddings over the compact foliation $\Sigma_\tau$, whose volume of leaves does not go to 0:
\begin{gather*}
\int_{s}^T\int_{\Sigma_\tau} H^4\delta^2 \ud\mu_{\Sigma_\tau}\ud \tau \lesssim \int_{s}^T\left(\int_{\Sigma_\tau} H^6 \ud\mu_{\Sigma_\tau} \right)^{2/3}\left(\int_{\Sigma_\tau} \delta^6\ud\mu_{\Sigma_\tau}\right)^{1/3}\ud \tau\\
\lesssim\sup_{\tau\in[0,T]}E^2_H(\Sigma_\tau)\int_{s}^TE_{\delta}(\Sigma_\tau) \ud \tau
\end{gather*}
One finally gets
$$
\left|E_{\delta}(\Sigma_t) - E_\delta(\Sigma_s)\right| \lesssim \left(\sup_{\tau\in[0,T]} E^2_H(\Sigma_\tau)+1\right) \int_{s}^TE_{\delta}(\Sigma_\tau) \ud \tau
$$
and using Gr\"onwall's inequality closes the estimates.

\end{proof}

Let now be $\theta$ and $\xi$ be two functions in $H^1(C_T)$ and consider the two characteristic Cauchy problems:
$$\left\{
\begin{array}{l}
 \square u +\frac16  \text{scal}_g u = u^3\\
u|_{C_T}=\theta
\end{array}\right.
\text{ and }
\left\{
\begin{array}{l}
 \square v +\frac16  \text{scal}_g v = v^3\\
v|_{C_T}=\xi
\end{array}\right.
$$
The function $\delta$ defined as being the difference between $u$ and $v$ satisfies the wave equation
$$
\square \delta + \frac16 \text{scal}_g \delta = H^2\delta 
$$
where
$$
H^2 = \left(u^2+v^2+uv \right) = \left(u+\frac{v}{2}\right)^2+\frac34v^2.
$$
A direct consequence of Proposition \ref{charest} is the following energy estimates:
\begin{proposition}\label{charest2}  There exists an increasing function $C$ such that the following inequalities hold:
\begin{gather*}
\Vert u-v \Vert^2_{H^1(\Sigma_T)} +\Vert T^a\nabla_a(u-v)\Vert^2_{L^2(\Sigma_T)} \leq C\left(\Vert u\Vert^2_{H^1(C_T)}+\Vert v\Vert^2_{H^1(C_T)} \right)\cdot ||\theta-\xi\Vert^2_{H^1(C_T)}\\
 ||\theta-\xi\Vert^2_{H^1(C_T)}\leq C\left(\Vert u\Vert^2_{H^1(C_T)}+\Vert v\Vert^2_{H^1(C_T)}\right) \cdot \left(\Vert u-v \Vert^2_{H^1(\Sigma_T)} +\Vert T^a\nabla_a(u-v)\Vert^2_{L^2(\Sigma_T)}\right)
\end{gather*}
\end{proposition}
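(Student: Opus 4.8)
The plan is to read Proposition~\ref{charest2} as Proposition~\ref{charest} applied to the difference $\delta = u - v$. As recorded just above the statement, $\delta$ solves the \emph{linear} equation
\[
\square \delta + \tfrac16 \text{scal}_g \delta = H^2 \delta, \qquad H^2 = u^2 + v^2 + uv = \left(u + \tfrac{v}{2}\right)^2 + \tfrac34 v^2 \ge 0,
\]
which is exactly the type of equation covered by Proposition~\ref{charest} (the bounded curvature term $\tfrac16\text{scal}_g\delta$ being absorbed as a lower-order coefficient, as in Section~\ref{sec:car}). Under the dictionary $E_\delta(C_T) = \Vert \delta\Vert^2_{H^1(C_T)} = \Vert \theta - \xi\Vert^2_{H^1(C_T)}$ (using $u|_{C_T}=\theta$, $v|_{C_T}=\xi$) and $E_\delta(\Sigma_T) = \Vert u - v\Vert^2_{H^1(\Sigma_T)} + \Vert T^a\nabla_a(u-v)\Vert^2_{L^2(\Sigma_T)}$, the two estimates of Proposition~\ref{charest} become, verbatim, the two estimates of Proposition~\ref{charest2} --- provided the prefactor $C(\sup_{[0,T]} E^2_H(\Sigma_t))$ can be rewritten as an increasing function of $\Vert u\Vert^2_{H^1(C_T)} + \Vert v\Vert^2_{H^1(C_T)}$.

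The one substantive step is therefore to bound $\sup_{\tau\in[0,T]} E_H(\Sigma_\tau)$ by the cone norms of $u$ and $v$. First I would control the coefficient pointwise: since the quadratic form $u^2+v^2+uv$ is positive definite, $H = \sqrt{u^2+v^2+uv}$ is a (Lipschitz, degree-one homogeneous) norm of the pair $(u,v)$, so $|H| \lesssim |u| + |v|$ and, almost everywhere by the chain rule, $|\nabla H| \lesssim |\nabla u| + |\nabla v|$ and $|T^a\nabla_a H| \lesssim |T^a\nabla_a u| + |T^a\nabla_a v|$. Integrating over $\Sigma_\tau$ gives $E_H(\Sigma_\tau) \lesssim E_u(\Sigma_\tau) + E_v(\Sigma_\tau)$, so that $H$ indeed has the regularity required to apply Proposition~\ref{charest}.

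Next I would propagate the cone data into the foliation. Since $u$ and $v$ are solutions of the nonlinear characteristic Cauchy problem with data $\theta$ and $\xi$ on $C_T = C^-(p)$, the a priori estimate of Theorem~\ref{thm:existence}, namely $E_\phi(\Sigma_\tau) \approx E_\phi(C^-(p))$, yields
\[
\sup_{\tau\in[0,T]} E_u(\Sigma_\tau) \lesssim \Vert u\Vert^2_{H^1(C_T)}, \qquad \sup_{\tau\in[0,T]} E_v(\Sigma_\tau) \lesssim \Vert v\Vert^2_{H^1(C_T)}.
\]
Combining with the previous step gives $\sup_\tau E^2_H(\Sigma_\tau) \lesssim \left(\Vert u\Vert^2_{H^1(C_T)} + \Vert v\Vert^2_{H^1(C_T)}\right)^2$. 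As the function $C$ in Proposition~\ref{charest} is increasing, $C(\sup_\tau E^2_H(\Sigma_\tau))$ is then dominated by $\tilde C\!\left(\Vert u\Vert^2_{H^1(C_T)} + \Vert v\Vert^2_{H^1(C_T)}\right)$, where $\tilde C(x) = C(c\,x^2)$ with $c$ the implied constant is again increasing. Substituting this prefactor into the two inequalities of Proposition~\ref{charest} produces the two displayed inequalities of Proposition~\ref{charest2}.

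The argument is thus essentially bookkeeping on top of Proposition~\ref{charest}, and I do not expect a deep obstacle. The only point demanding care --- and, in fact, the reason the whole H\"ormander detour is needed --- is the uniformity underlying the previous two paragraphs: the bound $E_H(\Sigma_\tau) \lesssim E_u(\Sigma_\tau) + E_v(\Sigma_\tau)$ and the passage to the $L^6$ control used inside Proposition~\ref{charest} both rely on the Sobolev embedding $H^1(\Sigma_\tau) \hookrightarrow L^6(\Sigma_\tau)$ holding \emph{uniformly} in $\tau$. This is legitimate here precisely because the reference foliation $\{\Sigma_\tau\}$ is the cylinder foliation of Section~\ref{sec:uniqueness}, whose leaves have volume bounded away from zero; on a foliation shrinking to the tip $p$ the Sobolev constant would blow up and the estimate would fail.
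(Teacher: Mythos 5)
Your proposal is correct and takes essentially the same route as the paper's proof: apply Proposition~\ref{charest} to $\delta=u-v$, bound $\sup_\tau E_H(\Sigma_\tau)$ by $\sup_\tau E_u(\Sigma_\tau)+\sup_\tau E_v(\Sigma_\tau)$ (the paper compresses this into the phrase ``triangular inequality''), and then convert these to cone norms via the a priori estimates. The only cosmetic difference is that you invoke Theorem~\ref{thm:existence} for that last propagation step where the paper cites \cite[Proposition 6.2]{Joudioux:2012eo}, and you supply the chain-rule justification for $E_H\lesssim E_u+E_v$ that the paper leaves implicit.
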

\begin{proof} The proof of this energy inequalities is a direct consequence of Proposition \ref{charest}. It suffices to notice that, for $H^2=u^2 +uv + v^2$, using the triangular inequality,
\begin{eqnarray*}
\sup_{[T,0]} E_H(\Sigma_t)^2 \lesssim \left(\sup_{[T,0]} E_u(\Sigma_t)^2 +\sup_{[T,0]} E_v(\Sigma_t)^2\right).
\end{eqnarray*}
Using a priori estimates such as the one proved in \cite[Proposition 6.2]{Joudioux:2012eo}, the later terms can be bounded by either
$$
E_u(\Sigma_T)^2+ E_v(\Sigma_T)^2
\text{ or }
E_u(C_T)^2+ E_v(C_T)^2.
$$
\end{proof}

Finally, an important consequence of the energy estimate \eqref{charest2} and Theorem \ref{thm:existence} is the following proposition:
\begin{proposition}\label{uniqueness} The characteristic Cauchy problem:
 $$\left\{
\begin{array}{l}
 \square u +\frac16 \scal_g u = u^3\\
u|_{C_T}=\theta \in H^1(C_T)
\end{array}\right.
$$
 admits at most local solution up to time $T$ in $C^0([T, 0],H^1(\Sigma_t))\cap C^1([T,0], L^2(\Sigma_t))$.
\end{proposition}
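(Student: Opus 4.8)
The plan is to obtain uniqueness as an immediate consequence of the two-sided energy equivalence of Proposition~\ref{charest2}, which already encodes the injectivity of the trace map at the level of solutions. Suppose $u$ and $v$ are two solutions of the characteristic Cauchy problem, both lying in $C^0([T,0],H^1(\Sigma_t))\cap C^1([T,0],L^2(\Sigma_t))$ and both restricting to the same datum $\theta$ on $C_T$; the goal is to show $u=v$. Because $u$ and $v$ belong to the stated class and restrict to $\theta\in H^1(C_T)$, Theorem~\ref{thm:existence}, and in particular the a priori estimate $E_\phi(\Sigma_\tau)\approx E_\phi(C^-(p))$, guarantees that $\Vert u\Vert_{H^1(C_T)}$ and $\Vert v\Vert_{H^1(C_T)}$ are finite, so that the constant $C(\Vert u\Vert^2_{H^1(C_T)}+\Vert v\Vert^2_{H^1(C_T)})$ occurring in Proposition~\ref{charest2} is finite.

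I would then apply the first inequality of Proposition~\ref{charest2} with $\xi=\theta$. Its right-hand side is proportional to $\Vert\theta-\xi\Vert^2_{H^1(C_T)}=0$, so that
$$
\Vert u-v\Vert^2_{H^1(\Sigma_T)}+\Vert T^a\nabla_a(u-v)\Vert^2_{L^2(\Sigma_T)}=0.
$$
Hence $u$ and $v$ carry identical Cauchy data on the spacelike slice $\Sigma_T$: they agree there, and so do their normal derivatives. Equivalently, the difference $\delta=u-v$, which solves the linear equation $\square\delta+\frac16\scal_g\delta=H^2\delta$ with $H^2=u^2+uv+v^2$, has vanishing Cauchy data on $\Sigma_T$.

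It remains to propagate this information off $\Sigma_T$ across the region $\Sigma_T\times[T,0]$. The cleanest route is to invoke the uniqueness part of the well-posedness result for the cubic wave equation recalled in Proposition~\ref{hor3}: two solutions of $\square w+\frac16\scal_g w=w^3$ sharing Cauchy data on $\Sigma_T$ coincide on its domain of dependence, which by construction exhausts the region under consideration. A more self-contained alternative is to apply the energy estimate of Proposition~\ref{charest} directly to $\delta$: since $\delta$ has zero Cauchy data on $\Sigma_T$, the Gr\"onwall argument used there forces $E_\delta(\Sigma_t)\equiv0$, hence $\delta\equiv0$. Either way one concludes $u=v$.

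The argument is short precisely because Proposition~\ref{charest2} carries the analytic weight, and there is no serious obstacle beyond bookkeeping. The single point that must be checked is the consistency of both sides of the energy equivalence with the regularity class $C^0([T,0],H^1(\Sigma_t))\cap C^1([T,0],L^2(\Sigma_t))$, so that the traces $u|_{\Sigma_T}\in H^1(\Sigma_T)$ and $T^a\nabla_a u|_{\Sigma_T}\in L^2(\Sigma_T)$ entering Proposition~\ref{charest2} are well defined; this is automatic from the stated function space.
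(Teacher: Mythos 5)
Your argument is correct and follows exactly the route the paper intends: the paper states Proposition~\ref{uniqueness} as an immediate consequence of Proposition~\ref{charest2} and Theorem~\ref{thm:existence} without writing out a proof, and your application of the first inequality of Proposition~\ref{charest2} with $\xi=\theta$, followed by propagation of the vanishing Cauchy data of $\delta=u-v$ via the Gr\"onwall estimate of Proposition~\ref{charest} (or the uniqueness for the spacelike Cauchy problem), is precisely the natural expansion of that assertion. No gap.
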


\section{Solving the characteristic Cauchy problem in the neighbourhood of $i^0$}\label{sec:neighbourhoodi0}

\subsection{Preliminary result}

The purpose of this section is to explain how the characteristic Cauchy problem can be solved in the neighbourhood of the spacelike infinity $i^0$ of the asymptotically simple manifold. The neighbourhood of $i^0$ is assumed to be isometric to the Schwarzschild space-time to agree with the work of the Corvino-Schoen and Chrusciel-Delay.

The Schwarzschild metric writes, in the standard spherical coordinates
$$
\hat{g}_S = \left(1-\frac{2m}{r}\right) \ud t^2-  \left(1-\frac{2m}{r}\right)^{-1} \ud r^2 -r^2d\omega_{\mathbb{S}^2}.
$$
Performing the change of coordinates
$$
r^\ast = r+ 2m \log \left(1-\frac{2m}{r}\right), R = \frac{1}{R} \text{ and } u = t-r^\ast,
$$
the metric takes the form
$$
\hat{g}_S =(1-2mR)\ud u^2 -\frac{2}{R^2} \ud u \ud R -\frac{1}{R^2} \ud \omega_{\mathbb{S}^2}.
$$
A conformal rescaling with a conformal factor $\Omega =\frac{1}{R}$ is finally performed to define the unphysical metric:
$$
 g_S =R^2(1-2mR)\ud u^2 - 2 \ud u \ud R -\frac{1}{R^2} \ud \omega_{\mathbb{S}^2}.
$$
Consider the domain $\Omega^+_{u_0} = \{u\leq u_0\}$, for some $u_0$ to be chosen later. It has been proved \cite{Joudioux:2012eo, mn04} that the following lemma holds:
\begin{lemma}\label{schwarzestimates} Let $\epsilon>0$. There exists $u_0<0$,  $|u_0|$ large enough, such that the following decay estimates in the coordinate $(u,r,\theta, \psi)$ hold:
\begin{gather*}
r<r^\ast<(1+\epsilon)r,
1<Rr^\ast<1+\epsilon,
0<R|u|<1+\epsilon,\\
1-\epsilon<1-2mR<1,
0<s=\frac{|u|}{r^\ast}<1.
\end{gather*}
Furthermore, the vector field
$$
 T^a = u^2\partial_u-2(1+uR)\partial_R
$$
is uniformly timelike in the region $\Omega^+_{u_0}$.
\end{lemma}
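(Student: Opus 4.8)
The plan is to treat the two assertions separately: the chain of scalar inequalities follows from the explicit asymptotics of the tortoise coordinate as $r\to\infty$, while the timelike character of $T^a$ reduces to a one–variable algebraic estimate. Throughout I would regard every quantity involved as a function of $(u,r)$, or equivalently of $(u,R)$ with $R=1/r$, and use that on $\Omega^+_{u_0}$ one has $u\le u_0<0$.

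The key reduction for the first part is that on the relevant piece of $\Omega^+_{u_0}$ (the one bounded by $\scri^+$ and the initial slice $\Sigma_0=\{t=0\}$) the radius $r$ is forced to be large once $|u_0|$ is large: since $u=t-r^\ast$ and $t\ge 0$ there, one has $r^\ast=t-u\ge -u\ge|u_0|$, so $r^\ast\ge|u_0|$ and hence $r\to\infty$ as $|u_0|\to\infty$. Granting this, each inequality is an elementary asymptotic statement. From the defining relation for $r^\ast$ the difference $r^\ast-r$ is positive and grows only logarithmically in $r$, so $r^\ast-r=o(r)$ and therefore $r<r^\ast<(1+\epsilon)r$, which is the same as $1<Rr^\ast=r^\ast/r<1+\epsilon$. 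The bound $1-\epsilon<1-2mR<1$ is immediate from $0<2mR=2m/r<\epsilon$. The bound $0<s=|u|/r^\ast<1$ comes from $|u|=r^\ast-t\le r^\ast$ (strictly below $r^\ast$ away from $\Sigma_0$) together with $u<0$, and finally $0<R|u|<1+\epsilon$ follows by factoring $R|u|=s\cdot(Rr^\ast)$ and combining $0<s<1$ with $1<Rr^\ast<1+\epsilon$. Choosing $|u_0|$ large enough to realise all of these simultaneously closes this part.

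For the timelike assertion I would compute $g_S(T,T)$ directly. As $T$ has no angular component, only the $(u,R)$ block of $g_S$ enters, with $g_{uu}=R^2(1-2mR)$, $g_{uR}=-1$ and $g_{RR}=0$, so that
$$
g_S(T,T)=R^2(1-2mR)\,u^4+4u^2(1+uR)=u^2\left[(1-2mR)(R|u|)^2-4\,R|u|+4\right],
$$
where I used $u<0$, hence $uR=-R|u|$. Writing $x=R|u|$, the bracket is the quadratic $(1-2mR)x^2-4x+4$, which for $R$ small is a small perturbation of $(x-2)^2\ge 0$. Recall that, with the conventions in force, a vector is timelike exactly when its $g_S$-norm is positive (as for $\partial_t$, since $\hat g_S(\partial_t,\partial_t)=1-2m/r>0$). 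Since the first part of the lemma guarantees $0<x<1+\epsilon$, i.e.\ $x$ stays uniformly below $2$, the bracket is bounded below by a positive constant, whence $g_S(T,T)\ge c\,u^2>0$ on $\Omega^+_{u_0}$ and $T$ is uniformly timelike.

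The only genuinely delicate point is the behaviour near the face $\{s=1\}$, i.e.\ the initial slice $\Sigma_0$: there $x=R|u|=Rr^\ast>1$, so the term $4(1+uR)=4(1-x)$ is \emph{negative} and does not by itself control the sign of $g_S(T,T)$. This is precisely where the $(x-2)^2$ structure is essential — the quadratic in $x$ stays strictly positive on the entire admissible range $x\in(0,1+\epsilon)$ exactly because $1+\epsilon<2$ — so the $u^4$ term compensates and uniform timelikeness is preserved up to that face. Apart from this algebraic observation, the argument is a routine matter of fixing $|u_0|$ sufficiently large.
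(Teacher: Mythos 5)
Your argument is correct; note that the paper itself does not prove this lemma but defers to \cite{Joudioux:2012eo, mn04}, where the estimates are obtained by exactly the elementary asymptotics of the tortoise coordinate and the same direct computation of $g_S(T,T)=u^2\bigl[(1-2mR)(R|u|)^2-4R|u|+4\bigr]$ that you carry out, including the observation that the quadratic stays positive on $0<R|u|<1+\epsilon<2$. You have also, correctly, read through two typos in the paper's formulas (the sign in the definition of $r^\ast$ and $R=1/r$), and your handling of the sign convention for timelike vectors via $\partial_t$ is the right way to resolve the paper's signature inconsistency.
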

The parameter $\epsilon$ will be chosen later when we perform the energy estimates. We define, in $\Omega^+_{u_0}=\{t>0, u<u_0\}$, the following hypersurfaces, for $u_0$ given in $\R$:
\begin{itemize}
\item $S_{u_0}=\{u=u_0\}$, a null hypersurface transverse to $\scri^+$;
\item $\Sigma_{0}^{u_0>}=\Sigma_0\cap\{u_0>u\}$, the part of the initial data surface $\Sigma_0$ in the past of $S_{u_0}$;
\item $\scri_{u_0}^+=\Omega^+_{u_0}\cap \scri^+$, the part of $\scri^+$  beyond $S_{u_0}$;
\item $\mathcal{H}_s=\Omega^+_{u_0}\cap\{u=-sr^\ast\}$, for $s$ in $[0,1]$, a foliation of $\Omega^+_{u_0}$ by spacelike hypersurfaces accumulating on $\scri$.
\end{itemize}

The volume form associated with $ g$ in the coordinates $(R,u,\omega_{\mathbb{S}^2})$ is then:
\begin{equation}\label{volumeformcoor}
\mu[ g]= \ud u\wedge\ud  R \wedge \ud^2\omega_{\mathbb{S}^2}.
\end{equation}

If $\phi$ is function defined on $\Omega^+_{u_0}$, one defines the following energies, in the domain $\Omega_{u_0}^+$:
\begin{proposition}\label{energyequivalenceschwarzschild}
There exists $u_0$, such that the following energy estimates holds on $\mathcal{H}_s$ in $\Omega^+_{u_0}$, for all $s$ in $[0,1]$:
$$
E_{\phi}(\mathcal{H}_s) = \int_{\mathcal{H}_s}i^\star_{\mathcal{H}_s}\left(\star \hat T^a T_{ab}\right)\approx \int_{\mathcal{H}_s}\left(u^2(\partial_u\phi)^2+\frac{R}{|u|}(\partial_R\phi)^2+|\nabla_{\mathbb{S}^2}\phi|^2+\frac{\phi^2}{2}+\frac{\phi^4}{4}\right)\ud u \wedge \ud \omega_{\mathbb{S}^2}.
$$
Furthermore, if one introduces the parameter
\begin{equation}\label{parametrization}
\tau:
\begin{array}{ccc}
[0,1]&\longrightarrow& [0,2]\\
s&\longmapsto& -2(\sqrt{s}-1),
\end{array}
\end{equation} the following energy estimates holds:
\begin{itemize}
 \item Energy decay:
$$
 E_{\phi}(\mathcal{H}_s)     \lesssim E_{\phi}(\som)
$$

\item A priori estimates:
$$
E(\scri^+_{u_0})+\int_{\scri^+_{u_0}}\phi^4\ud \mu_{\scri^+}+E(S_{u_0})+ \int_{S_{u_0}}\phi^4\ud \mu_{S_u} \approx E(\som) + \int_{\som} u^4 \ud_{\som}
$$
\end{itemize}
\end{proposition}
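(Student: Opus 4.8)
The plan is to obtain all three assertions from a single divergence-theorem computation for the energy current built from the stress-energy tensor of the conformal cubic wave equation, contracted with the uniformly timelike field $T^a$ of Lemma~\ref{schwarzestimates}, and then to absorb the resulting bulk error through a Gr\"onwall argument tuned by the reparametrisation \eqref{parametrization}.

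First I would fix the stress-energy tensor adapted to the equation $\square\phi + \frac16\scal_g\phi = \phi^3$. The natural object is
$$
T_{ab} = \nabla_a\phi\,\nabla_b\phi - g_{ab}\left(\tfrac12\nabla_c\phi\,\nabla^c\phi + \tfrac14\phi^4\right),
$$
up to the lower-order curvature correction proportional to $\scal_g\phi^2 g_{ab}$, so that on a solution $\nabla^a(T^bT_{ab})$ reduces to the deformation-tensor term $\nabla^{(a}T^{b)}T_{ab}$ together with bounded curvature contributions. I would then compute the flux $i^\star_{\mathcal{H}_s}(\star T^aT_{ab})$ in the coordinates $(u,R,\omega_{\mathbb{S}^2})$ using the volume form \eqref{volumeformcoor}: writing $\mathcal{H}_s$ as the level set $\{u+sr^\star=0\}$ and contracting with the explicit components of $T^a=u^2\partial_u-2(1+uR)\partial_R$ reproduces, after expansion, exactly the integrand displayed in the first assertion up to coefficients. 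The equivalence $\approx$ then follows because Lemma~\ref{schwarzestimates} pins every metric coefficient between two constants close to $1$ once $|u_0|$ is large (i.e. $\epsilon$ small), while the uniform timelike character of $T^a$ supplies the coercivity needed for the lower bound.

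For the energy decay I would apply Stokes' theorem to the current $T^aT_{ab}$ on the slab of $\Omega^+_{u_0}$ between two leaves $\mathcal{H}_{s_1}$ and $\mathcal{H}_{s_2}$. The boundary integrals reproduce $E_\phi(\mathcal{H}_{s_i})$, whereas the bulk integral is $\int \left(\nabla^{(a}T^{b)}T_{ab} - T^a\nabla_a\phi\,(\cdots)\right)\ud\mu[g]$. Bounding the integrand pointwise by the energy density of $\mathcal{H}_s$ via the coordinate estimates of Lemma~\ref{schwarzestimates} yields a differential inequality of the shape $|\partial_s E_\phi(\mathcal{H}_s)| \lesssim f(s)\,E_\phi(\mathcal{H}_s)$. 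The role of the reparametrisation $\tau = 2(1-\sqrt{s})$ is precisely to render $f$ integrable: since $\ud\tau = -\ud s/\sqrt{s}$, it converts the $s^{-1/2}$-type degeneracy of the deformation tensor near $\scri$ (where the leaves pinch) into a bounded density in $\tau$, and Gr\"onwall's inequality in the variable $\tau$ closes the bound $E_\phi(\mathcal{H}_s)\lesssim E_\phi(\som)$.

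The two-sided a priori equivalence is then obtained from the same divergence computation carried out on the whole of $\Omega^+_{u_0}$, whose boundary consists of $\scri^+_{u_0}$, $S_{u_0}$ and $\som$; each boundary flux is computed in coordinates and identified, through Lemma~\ref{schwarzestimates}, with the corresponding energy, the weight $u^4$ on $\som$ arising from the $u^2\partial_u$ component of $T^a$. Running the Gr\"onwall estimate in both directions of the foliation produces the comparison $\approx$. The main obstacle throughout is the uniform control of the bulk term $\nabla^{(a}T^{b)}T_{ab}$ as $s\to 0$, that is as the foliation accumulates on $\scri$, where both the metric and $T^a$ degenerate: only the combination of taking $|u_0|$ large and passing to the variable $\tau$ makes the error integrable and the estimate closeable.
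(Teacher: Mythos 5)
Your proposal is correct and follows essentially the route the paper itself relies on: Proposition \ref{energyequivalenceschwarzschild} is stated here without proof and imported from \cite{Joudioux:2012eo} and \cite{mn04}, where it is obtained exactly as you describe, by contracting the stress-energy tensor with the Morawetz-type field $T^a=u^2\partial_u-2(1+uR)\partial_R$, computing the fluxes in the coordinates $(u,R,\omega_{\mathbb{S}^2})$ with the decay bounds of Lemma \ref{schwarzestimates}, and closing a Gr\"onwall estimate in the reparametrised variable $\tau=2(1-\sqrt{s})$. Two small corrections: in the decay estimate the Stokes region between $\som$ and $\mathcal{H}_s$ also has a null boundary piece along $S_{u_0}$, whose flux must be checked to have a favourable sign before being dropped; and the term $\int_{\som}u^4$ in the a priori estimate is most naturally read as the $L^4$-norm of the field on $\som$ (i.e.\ $\int_{\som}\phi^4\,\ud\mu_{\som}$, matching the $\phi^4$ fluxes on $\scri^+_{u_0}$ and $S_{u_0}$), not as a coordinate weight produced by the $u^2\partial_u$ component of $T^a$ as you suggest.
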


\subsection{Local existence for the Characteristic Cauchy problem on $\scri^+$}

The purpose of this section is to prove that the Cauchy problem:
\begin{equation}\label{eq:carschwarzschild}
\begin{array}{l}
 \square \phi + \frac16\text{Scal}_{ g} \phi =  \phi^3 \\
 \phi|_{\scri^+_{u_0}} = \theta^+ \in H^1(\scri^+_{u_0}) \text{ and }  \phi|_{S_{u_0}} = \theta^0 \in H^{1}(S_{u_0})
\end{array}
\end{equation}
where $H^{1}(S_{u_0})$ is defined as being the completion of the space of traces of smooth functions on $S_{u_0}$ for the norm
$$
 \int_{S_{u_0}} i^\star_{S_{u_0}}\left(\star \hat T^a T_{\text{lin}\,ab}\right)
$$
$T_{\text{lin}\,ab}$ being the stress-energy tensor associated with the linear wave equation.
In a previous work \cite{Joudioux:2012eo}, the following result has been proved:
\begin{itemize}
\item using the fact that the leaves of the foliation $\mathcal{H}_s$ endowed with its induced metric is uniformly equivalent to a cylinder, it is possible to establish energy estimates for the difference of solutions of the non linear equation;
\item it is possible to establish an existence result for solutions of the nonlinear equation for small data, using either a Picard iteration (as in \cite{Joudioux:2010tn}) or H\"ormander slowing down process.
\end{itemize}
\begin{proposition}\label{prop:schwarz1} Let $\phi$ and $\psi$ be two solutions of the characteristic Cauchy problem and denote by $\delta = u-v$ their difference. One denotes the energy of $\delta$ by
$$
E_{\delta}(\mathcal{H}_s) = \int_{\mathcal{H}_s}i^\star_{\mathcal{H}_s}\left(\star \hat T^a T_{\text{lin}\,ab}\right)\approx \int_{\mathcal{H}_s}\left(u^2(\partial_u\delta)^2+\frac{R}{|u|}(\partial_R\delta)^2+|\nabla_{\mathbb{S}^2}\phi|^2+\frac{\delta^2}{2} \right)\ud u \wedge \ud \omega_{\mathbb{S}^2}
$$
where $T_{\text{lin}\,ab}$ is the stress energy tensor for $\delta$ associated with the linear wave equation. Then, $\delta$ satisfies the following energy estimates on the foliation $\mathcal{H}_s$;
$$
E_\delta(\mathcal{H_s}) \lesssim  \sup_{\sigma\in [s;t]} \left( ||\phi||^4_{H^1(\mathcal{H}_s)} + ||\psi||^4_{H^1(\mathcal{H}_s)}\right) E_\delta(\mathcal{H_s}).
$$
In particular, the Cauchy problem \eqref{eq:carschwarzschild} admits at most one solution in $C^0([0,\epsilon], H^1(\mathcal{H}_s))\cap C^1([0,\epsilon], L^2(\mathcal{H}_s))$ and the energy
$$
s\mapsto  E_\delta(\mathcal{H}_s)
$$
is continuous.
\end{proposition}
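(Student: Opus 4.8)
The plan is to mirror the proof of Proposition \ref{charest}, transplanting the energy method from the cylinder foliation $\Sigma_t$ onto the foliation $\mathcal{H}_s$ of $\Omega^+_{u_0}$ which accumulates on $\scri$. First I would record that, exactly as in the computation preceding Proposition \ref{charest2}, the difference $\delta = \phi - \psi$ of two solutions of \eqref{eq:carschwarzschild} solves the \emph{linear} wave equation with potential
$$
\square \delta + \frac16 \scal_g \delta = H^2 \delta, \qquad H^2 = \phi^2 + \phi\psi + \psi^2 = \left(\phi + \frac{\psi}{2}\right)^2 + \frac34 \psi^2 \geq 0.
$$
Since in the uniqueness statement the data of $\phi$ and $\psi$ agree on $\scri^+_{u_0}$ and $S_{u_0}$, the trace of $\delta$ there vanishes. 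The whole proposition thus reduces to an energy estimate for this linear-with-potential equation along $\mathcal{H}_s$.

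Next I would set up the stress-energy identity. Let $T_{\text{lin}\,ab}$ be the linear stress-energy tensor of $\delta$ and contract it with the vector field $\hat T^a = u^2\partial_u - 2(1+uR)\partial_R$ of Lemma \ref{schwarzestimates}, which is uniformly timelike on $\Omega^+_{u_0}$ and whose flux through $\mathcal{H}_s$ is, by Proposition \ref{energyequivalenceschwarzschild}, equivalent to $E_\delta(\mathcal{H}_s)$. Computing the divergence gives
$$
\nabla^a\!\left(\hat T^b T_{\text{lin}\,ab}\right) = \nabla^{(a}\hat T^{b)} T_{\text{lin}\,ab} - \hat T^a \nabla_a\delta\, H^2\delta + (\text{curvature terms}),
$$
where the deformation term $\nabla^{(a}\hat T^{b)}$ and the bounded scalar curvature of $g_S$ are controlled by the geometry of $\Omega^+_{u_0}$. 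Applying Stokes' theorem between two leaves $\mathcal{H}_s$ and $\mathcal{H}_\sigma$, the geometric error terms are absorbed into $\int E_\delta(\mathcal{H}_r)\,\ud r$, so the only delicate contribution is the source term $\int \hat T^a\nabla_a\delta\, H^2\delta$, which after Cauchy--Schwarz is bounded, up to $E_\delta$, by $\int \left(\int_{\mathcal{H}_r} H^4\delta^2\right)\ud r$.

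The main obstacle --- and the reason this region is treated separately from the H\"ormander region --- is the nonlinear term $\int_{\mathcal{H}_r} H^4\delta^2$. I would estimate it by H\"older,
$$
\int_{\mathcal{H}_r} H^4\delta^2 \,\ud\mu \leq \left(\int_{\mathcal{H}_r} H^6\,\ud\mu\right)^{2/3}\!\left(\int_{\mathcal{H}_r}\delta^6\,\ud\mu\right)^{1/3},
$$
and then invoke the Sobolev embedding $H^1(\mathcal{H}_r) \hookrightarrow L^6(\mathcal{H}_r)$. The crucial input, recalled from \cite{Joudioux:2012eo} and highlighted in the introduction, is that each leaf $\mathcal{H}_r$ endowed with its induced metric is \emph{uniformly} equivalent to a fixed cylinder, so the Sobolev constant does not degenerate as $r$ ranges over $[0,1]$ and the foliation accumulates on $\scri$. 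This yields $\int_{\mathcal{H}_r} H^4\delta^2 \lesssim \sup_r E_H(\mathcal{H}_r)^2\, E_\delta(\mathcal{H}_r)$, and since $E_H^2 \lesssim \Vert\phi\Vert^4_{H^1(\mathcal{H}_r)} + \Vert\psi\Vert^4_{H^1(\mathcal{H}_r)}$ by the triangle inequality, the claimed coefficient appears.

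Finally I would collect the estimates into the two-sided integral inequality
$$
\left|E_\delta(\mathcal{H}_s) - E_\delta(\mathcal{H}_\sigma)\right| \lesssim \left(1 + \sup_{r}\bigl(\Vert\phi\Vert^4_{H^1(\mathcal{H}_r)} + \Vert\psi\Vert^4_{H^1(\mathcal{H}_r)}\bigr)\right)\int_{[s,\sigma]} E_\delta(\mathcal{H}_r)\,\ud r,
$$
which gives the displayed energy estimate of the proposition and, being two-sided, shows that $s \mapsto E_\delta(\mathcal{H}_s)$ is locally Lipschitz, hence continuous. On the short interval $[0,\epsilon]$ the coefficient is small, so the estimate forces $E_\delta$ to be controlled by its value on the data surface; in the uniqueness setting $E_\delta$ vanishes on $\scri^+_{u_0}\cup S_{u_0}$, whence $E_\delta \equiv 0$ and $\delta \equiv 0$, giving at most one solution in $C^0([0,\epsilon],H^1(\mathcal{H}_s))\cap C^1([0,\epsilon],L^2(\mathcal{H}_s))$. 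The a priori bounds on $\sup_r\bigl(\Vert\phi\Vert^4_{H^1(\mathcal{H}_r)} + \Vert\psi\Vert^4_{H^1(\mathcal{H}_r)}\bigr)$ needed to make the coefficient finite come from \cite[Proposition 6.2]{Joudioux:2012eo}, exactly as at the end of Proposition \ref{charest2}.
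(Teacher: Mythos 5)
Your proposal is correct and follows essentially the same route as the paper: the difference $\delta$ satisfies the linear equation with potential $H^2=\phi^2+\phi\psi+\psi^2$, the energy identity is obtained by contracting the linear stress-energy tensor with the timelike vector field $\hat T^a$ of Lemma \ref{schwarzestimates} and applying Stokes' theorem between leaves, the cubic interaction term is handled by H\"older together with the Sobolev embedding performed uniformly on the foliation $(\mathcal{H}_s)$ (whose leaves are uniformly equivalent to a cylinder), and Gr\"onwall closes the estimate, yielding the continuity of $s\mapsto E_\delta(\mathcal{H}_s)$ and uniqueness. Your write-up is in fact somewhat more carefully organised than the paper's, but contains no new idea and no gap.
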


\begin{proof} Let $\phi$ and $\psi$ be two global solutions of \eqref{eq:carschwarzschild}. Their difference $\delta =\phi -\psi$ satisfies the equation:
$$
\square \delta + \frac{1}{6}\text{scal}_{\hat g} \delta = \left(\phi^2+\phi\psi+\phi^2\right)\delta.
$$

Using the Stokes theorem between $\scri^+_{u_0}$, $S_{u_0}$ and $\mathcal{H}_s$, one gets, since $\delta$ vanishes on $S_{u_0}$ and $\scri^+_{u_0}$, using proposition:
\begin{gather*}
E_{\delta}(\mathcal{H}_s) \lesssim 
 \int_{0}^\varepsilon E_\delta(\mathcal{H}_\sigma) \ud \sigma +  \int_{0}^{s}\int_{\mathcal{H_{\sigma}}} \vert \hat T^a\hat \nabla_{a} \delta \cdot \delta\vert + (\phi^2+\psi^2)\vert\hat T^a\hat \nabla_{a} \delta \cdot \delta \vert \ud \mu_{\mathcal{H}_{\sigma}} \ud \sigma\\ + E_{\delta } (S_{u_0}) .
\end{gather*}
As already said, Sobolev estimates can be performed uniformly on the foliation $(\mathcal{H}_s)$. As a consequence, using H\"older's inequality, one gets:
$$
\int_{0}^{s}\int_{\mathcal{H_{\sigma}}}(\phi^2+\psi^2)\vert \hat T^a\hat \nabla_{a} \delta \cdot \delta \vert \ud \mu_{\mathcal{H}_{\sigma}}\leq  C\int_{0}^\varepsilon  E_\delta(\mathcal{H}_\sigma) \ud \sigma + \varepsilon \sup_{\sigma\in[0,\epsilon]} \left(E_{\phi}(\mathcal{H}_\sigma)^2+E_{\psi}(\mathcal{H}_\sigma)^2\right)
$$
where $T^a$ is the approximate Morawetz vector field
$$
T^a = u \partial_u + v \partial_ v.
$$
The constant depends only on the $L^\infty$-bound of the scalar curvature. Finally, using Gr\"onwall lemma, one gets that
$$
E_{\delta}(\mathcal{H}_s) \lesssim \varepsilon e^{C\varepsilon} \sup_{\sigma\in[0,\epsilon]} \left(E_{\phi}(\mathcal{H}_\sigma)^2+E_{\psi}(\mathcal{H}_\sigma)^2\right) E_{\delta}(\mathcal{H}_0) \times E_\delta (S_{u_0}).
$$
As a consequence, the mapping
\[
\begin{array}{ccc}
B(0,r)\subset H^1(\scri^+_{u_0}) \times  H^1(S_{u_0}) & \longrightarrow & L^\infty ([0,\epsilon], H^1(\mathcal{H}_s))\\
(\theta^+, \theta ^0)  & \longmapsto & \phi
\end{array}
\]
is continuous. Furthermore, the Cauchy problem \eqref{eq:carschwarzschild} admits at most a unique solution in $C^0([0,\epsilon], H^1(\mathcal{H}_s))\cap C^1([0,\epsilon], L^2(\mathcal{H}_s))$.
\end{proof}

\begin{proposition}\label{prop:existenceschwarz} Let $\varepsilon$ be a positive real number. Then, for $\varepsilon$ small enough, the characteristic Cauchy problem \eqref{eq:carschwarzschild} admits a unique solution in the neighborhood of $\scri^+_{u_0}$
$$
\bigcup_{s\in[0, \varepsilon]} H_{s} = \left\{ u \geq (1+\varepsilon) r^\star \right\}.
$$
\end{proposition}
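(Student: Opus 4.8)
The plan is to construct the solution by a contraction–mapping (Picard) argument in the energy space attached to the foliation $(\mathcal{H}_s)_{s\in[0,\varepsilon]}$, using the smallness of $\varepsilon$ to manufacture the contraction factor; uniqueness is then already delivered by Proposition \ref{prop:schwarz1}. Concretely, I would work in the Banach space
$$
X_\varepsilon = C^0([0,\varepsilon],H^1(\mathcal{H}_s))\cap C^1([0,\varepsilon],L^2(\mathcal{H}_s)),
$$
normed by $\sup_{s\in[0,\varepsilon]}E_\Phi(\mathcal{H}_s)^{1/2}$, and define a map $\mathcal{N}$ sending $\Phi\in X_\varepsilon$ to the solution $\psi$ of the \emph{linear} characteristic problem
$$
\square\psi+\tfrac16\mathrm{Scal}_g\,\psi=\Phi^3,\qquad \psi|_{\scri^+_{u_0}}=\theta^+,\quad \psi|_{S_{u_0}}=\theta^0 .
$$
The base iterate ($\Phi=0$) is the solution of the linear Goursat problem with data on the pair $\scri^+_{u_0}\cup S_{u_0}$, whose solvability and continuity follow from the linear characteristic theory already used in \cite{Joudioux:2012eo} together with the energy equivalence of Proposition \ref{energyequivalenceschwarzschild}.

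The heart of the argument is to show that, for $\varepsilon$ small enough (depending on $M:=\big(\|\theta^+\|^2_{H^1(\scri^+_{u_0})}+\|\theta^0\|^2_{H^1(S_{u_0})}\big)^{1/2}$), the map $\mathcal{N}$ sends a ball $B(0,r)\subset X_\varepsilon$ into itself and is a contraction there. Both points rest on Proposition \ref{energyequivalenceschwarzschild}. For the self-map property, the linear energy estimate bounds $E_{\mathcal{N}(\Phi)}(\mathcal{H}_s)$ by the data energy plus a source integral $\int_0^s(\cdots)\,\ud\sigma$ in which the cubic term is absorbed, after Young's inequality and the uniform embedding $H^1(\mathcal{H}_\sigma)\hookrightarrow L^6(\mathcal{H}_\sigma)$, into a contribution controlled by $\int_0^\varepsilon E_\Phi(\mathcal{H}_\sigma)^3\,\ud\sigma\lesssim \varepsilon\,r^6$; choosing $r\approx 2C M$ then forces $\mathcal{N}(B(0,r))\subset B(0,r)$ once $\varepsilon$ is small. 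For the contraction, the difference $\mathcal{N}(\Phi_1)-\mathcal{N}(\Phi_2)$ solves a linear equation with zero characteristic data and source $(\Phi_1^2+\Phi_1\Phi_2+\Phi_2^2)(\Phi_1-\Phi_2)$, so the energy method of Proposition \ref{prop:schwarz1} yields
$$
\|\mathcal{N}(\Phi_1)-\mathcal{N}(\Phi_2)\|_{X_\varepsilon}\lesssim \varepsilon\,C(r)\,\|\Phi_1-\Phi_2\|_{X_\varepsilon},
$$
a genuine contraction for $\varepsilon$ small. The Banach fixed–point theorem then produces $\phi\in B(0,r)$ solving \eqref{eq:carschwarzschild} on the region $\bigcup_{s\in[0,\varepsilon]}\mathcal{H}_s$ of the statement, and Proposition \ref{prop:schwarz1} upgrades uniqueness in the ball to uniqueness in the full class $C^0([0,\varepsilon],H^1(\mathcal{H}_s))\cap C^1([0,\varepsilon],L^2(\mathcal{H}_s))$.

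The hard part — and the reason this neighbourhood of $i^0$ must be treated apart from the timelike–infinity region of Section \ref{sec:car} — is the uniform control of the cubic source across the foliation. The decisive geometric input is that each leaf $\mathcal{H}_s$, with its induced metric, is \emph{uniformly equivalent to a cylinder} (Lemma \ref{schwarzestimates} and Proposition \ref{energyequivalenceschwarzschild}), so that the embedding $H^1(\mathcal{H}_s)\hookrightarrow L^6(\mathcal{H}_s)$ holds with a constant independent of $s$; this is exactly the estimate that degenerates on the shrinking foliation near $i^\pm$. Combined with the factor $\varepsilon$ gained from integrating the source in the evolution parameter, this uniform embedding is what closes both the stability and the contraction estimates. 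One must still verify that the linear base solution exists on the corner $\scri^+_{u_0}\cap S_{u_0}$, either through the linear Goursat theory or, equivalently, through H\"ormander's slowing–down scheme as in Section \ref{sec:car}; given the smoothness of $g_S$ and the energy equivalence already recorded, this is routine and does not interfere with the fixed–point iteration. An entirely parallel construction with a Picard iteration as in \cite{Joudioux:2010tn} would give the same conclusion.
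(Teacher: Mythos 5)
Your proposal is correct and follows essentially the same route as the paper: a Picard iteration for the problem with source $\Phi^3$ and fixed characteristic data on $\scri^+_{u_0}\cup S_{u_0}$, closed by the uniform Sobolev embedding $H^1(\mathcal{H}_s)\hookrightarrow L^6(\mathcal{H}_s)$ on the non-degenerating foliation and the factor of $\varepsilon$ gained from integrating the source, with uniqueness and the upgrade to $C^0([0,\varepsilon],H^1(\mathcal{H}_s))$ supplied by Proposition \ref{prop:schwarz1}. Your contraction-mapping packaging (self-map of a ball plus Lipschitz estimate on differences) is, if anything, a cleaner rendering of the paper's explicit recursion, whose displayed bound estimates $E_{\phi_n}$ itself rather than the differences $\phi_n-\phi_{n-1}$ that actually establish the Cauchy property.
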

\begin{proof} The proof of the proposition is based on a Picard iteration defined as follows; let $\phi_0$ be a solution of the linear Cauchy problem:
\begin{equation}\label{initialstep}
\begin{array}{l}
 \square \phi_0 +\text{scal}_{ g} \phi_0 = 0 \\
\phi_0|_{\scri^+_{u_0}} = \theta \in H^1(\scri^+_{u_0}).
\end{array}
\end{equation}
and define the sequence as follows, for $n>0$:
\begin{equation}\label{recursion}
\begin{array}{l}
 \square \phi_n +\text{scal}_{ g} \phi_n = \phi^3_{n-1} \\
\phi_n|_{\scri^+_{u_0}} = \theta \in H^1(\scri^+_{u_0}).
\end{array}
\end{equation}
H\"ormander's theorem \cite{MR1073287} ensures that these two Cauchy problems admits a global solution in $L^\infty ([0,1], H^1(\mathcal{H}_s))$. Using the stress energy tensor associated with the linear wave equation, one obtains the following energy estimates:
\begin{equation}
E_{\phi_n}(\mathcal{H}_s) \leq C \int_{0}^s  E_{\phi_n}(\mathcal{H}_\sigma) \ud \sigma + \int_{0}^s \int_{\mathcal{H}_s}\phi_{n-1}^6 \ud \mu{\mathcal{H}_\sigma}\ud \sigma,
\end{equation}
and, using Gr\"onwall's lemma and the Sobolev embedding on the foliation $\mathcal{H}_s$, one gets immediately that
\begin{equation}
\sup_{\sigma\in [0,s]}E_{\phi_n}  (\mathcal{H}_\sigma) \leq C s E_{\theta}(\scri^+_{u_0})\left(\sup_{\sigma\in [0,s]} E_{\phi_{n-1}}(\mathcal{H}_{\sigma}) \right)^3.
 \end{equation}
An immediate recursion gives that, for all $n>0$ and $s=\epsilon$
\begin{eqnarray*}
\sup_{\sigma\in [0,\epsilon]}E_{\phi_n}  (\mathcal{H}_\sigma) & \leq & \left( C\epsilon E_{\theta}(\scri^+_{u_0}) \left(\sup_{\sigma\in [0,\epsilon]} E_{\phi_{0}}(\mathcal{H}_{\sigma}) \right)^\frac12  \right)^{3^n}\\
&\leq&\left( C\epsilon \left( E_{\theta}(\scri^+_{u_0}\right)^{\frac12} \right)^{3^n}.
\end{eqnarray*}
As a consequence, the sequence $(\phi_n)$ is a Cauchy sequence in the complete space $L^\infty\left([0,\epsilon], H^1(\mathcal{H}_s) \right)$ and, as a consequence, converges strongly towards a function $\phi$ in the same space. Since $\phi_n$ is a weak solution of the Cauchy problem \ref{eq:carschwarzschild}, so is $\phi$. As a consequence, the previous proposition \ref{prop:schwarz1} proves that the solution actually belongs to $C^0\left([0,\epsilon], H^1(\mathcal{H}_s) \right)$. This concludes the proof of the existence of solution to \eqref{eq:carschwarzschild} in  $C^0\left([0,\epsilon], H^1(\mathcal{H}_s) \right)$.
\end{proof}

\section{Global Cauchy problem on $\scri^+$}\label{sec:globalcauchy}

The existence of global solutions to the Cauchy problem is proved using a gluing process: the Cauchy problem with data in $H^1(\scri^+)$ are constructed up to a spacelike hypersurface and, then, considering the traces of the solution on two give hypersurfaces in the neighbourhood of $i^0$ solved up to the initial time slice $\Sigma_0$. The main issue arising in this process is that the constants arising in the energy estimates depend on the $L^\infty$-bounds of the metric and its inverse on the bounded neighbourhood of $i^0$. Since the compactification we are working with has the particularity to have an asymptotic end at $i^0$, these constants are not bounded on the whole future of $\Sigma_0$. This problem is avoided as follows:
\begin{itemize}
\item Let $u_0$ be in $\mathbb{R}$ such as in Proposition \ref{energyequivalenceschwarzschild}.
 \item Let $\Sigma$ be a given spacelike hypersurface such that:
\begin{itemize}
\item $\Sigma$ is transverse to $\scri^+$;
\item $\Sigma$  is in the past of $S_{u_0}\cap J^+(\{t = 0\})$ and in the future of $\Sigma_{0}$; in particular, $\Sigma$ coincides with $\Sigma_{0}$ for from $i^0$.
\end{itemize}
\end{itemize}
These choices of $u_0$ and $\Sigma$ are made once for all.

\begin{proposition}\label{prop:global} The characteristic Cauchy problem
$$
\begin{array}{l}
 \square \phi + \frac{1}{6}\text{Scal}_{{g}} \phi = \phi ^3 \\
\phi|_{\scri^+} = \theta \in H^1(\scri^+).
\end{array}
$$
admits a unique global solution in \({C^0([0,1], H^1(\tilde \Sigma_s)) \cap C^1([0,1],L^2(\tilde \Sigma_s))}\)
where $(\tilde \Sigma_s)_{s>0}$ is a smooth spacelike foliation extending the definition of the foliation $(\mathcal{H}_s)$ in the neighbourhood of $i_0$.  Furthermore, if $\tilde \phi$ is another solution with initial data $\tilde \theta$, the following energy estimates holds: there exists a increasing function $f$ such that
$$
E_{\phi-\tilde \phi}(\Sigma_0) \leq  C f\left( \sup_{\sigma \in [0,1]} \left(||\phi||^2_{H^1(\mathcal{H}_s)} + ||\tilde \phi||^2_{H^1(\mathcal{H}_s}\right)  \right) ||\theta -\tilde \theta ||^2_{H^1(\scri^+)}.
$$
\end{proposition}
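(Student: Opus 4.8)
The plan is to construct the global solution by \emph{gluing} the solution produced by H\"ormander's method in the region near timelike infinity, where the compactification is compact in time, to the solution produced by the explicit Schwarzschild foliation near spacelike infinity $i^0$, where Propositions \ref{energyequivalenceschwarzschild}, \ref{prop:existenceschwarz} and \ref{prop:schwarz1} keep every constant uniformly bounded. The null hypersurface $S_{u_0}$ serves as the interface along which the two pieces are matched, and the foliation $(\tilde\Sigma_s)$ is chosen to coincide with $(\mathcal{H}_s)$ near $i^0$ and with a spacelike extension reaching $\Sigma_0$ away from $i^0$. As a first step, I would restrict $\theta\in H^1(\scri^{+})$ to the compact part $\scri^{+}\cap\{u\geq u_0\}$ lying away from $i^0$, which after compactification is a truncated characteristic cone with vertex $i^{+}$. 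Theorem \ref{thm:existence} then produces a strong solution $\phi_1$ in $C^0H^1\cap C^1L^2$ over the corresponding slices, down to the spacelike hypersurface $\Sigma$, together with the equivalence $E_{\phi_1}(\Sigma_\tau)\approx E_{\phi_1}(\scri^{+}\cap\{u\geq u_0\})$; uniqueness in this region is Proposition \ref{uniqueness}. Since every point of $S_{u_0}$ has its causal future meeting $\scri^{+}$ only in $\{u\geq u_0\}$, the trace $\theta^0=\phi_1|_{S_{u_0}}$ is determined by $\phi_1$, and the energy fluxes underlying Theorem \ref{thm:existence} place it in $H^1(S_{u_0})$.

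With $(\theta^{+},\theta^{0})=(\theta|_{\scri^{+}_{u_0}},\theta^0)$ as characteristic data, I would next solve the problem \eqref{eq:carschwarzschild} on the Schwarzschild corner $\Omega^+_{u_0}$. Proposition \ref{prop:existenceschwarz} gives local existence in a neighbourhood of $\scri^{+}_{u_0}$, and the energy decay and uniform a priori estimates of Proposition \ref{energyequivalenceschwarzschild}, whose constants do not degenerate because the leaves $\mathcal{H}_s$ are uniformly equivalent to a cylinder and $T^a$ is uniformly timelike, feed a continuation argument propagating the solution $\phi_2$ over the whole range $s\in[0,1]$, down to $\sop$. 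Setting $\phi=\phi_1$ on $\{u\geq u_0\}$ and $\phi=\phi_2$ on $\{u\leq u_0\}$, both pieces restrict to the same datum $\theta^0$ on $S_{u_0}$; the uniqueness statements of Propositions \ref{uniqueness} and \ref{prop:schwarz1}, applied through the domains of dependence, show that they agree where their domains overlap and that $\phi$ solves the equation globally. The membership in $C^0([0,1],H^1(\tilde\Sigma_s))\cap C^1([0,1],L^2(\tilde\Sigma_s))$ then follows from the regularity of each piece once continuity in the parameter $s$ across $S_{u_0}$ is verified, and global uniqueness follows from the same two results.

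For the stability estimate I would chain the two available comparison inequalities. Given a second datum $\tilde\theta$ with solution $\tilde\phi$, Proposition \ref{charest2} bounds the energy of the difference $\phi_1-\tilde\phi_1$ in the region near $i^{+}$, and hence, through the flux on $S_{u_0}$, the trace difference $\theta^0-\tilde\theta^0$, by an increasing function of the solution energies times $\|\theta-\tilde\theta\|^2_{H^1(\scri^{+})}$. Proposition \ref{prop:schwarz1} then propagates the difference of the data $(\theta^{+}-\tilde\theta^{+},\theta^0-\tilde\theta^0)$ through the corner down to $\sop$. Composing the two inequalities and absorbing every energy-dependent prefactor into a single increasing function $f$ of $\sup_{\sigma\in[0,1]}\bigl(\|\phi\|^2_{H^1(\mathcal{H}_s)}+\|\tilde\phi\|^2_{H^1(\mathcal{H}_s)}\bigr)$ yields the asserted bound on $E_{\phi-\tilde\phi}(\Sigma_0)$.

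The step I expect to be the main obstacle is the control of constants at the junction with $i^0$ together with the gluing itself: one must check that the interface datum $\theta^0$ genuinely lies in $H^1(S_{u_0})$, that the continuation in the corner and the comparison of solutions there invoke only estimates whose constants remain bounded despite the asymptotic end at $i^0$ — precisely what the choice of $u_0$, the hypersurface $\Sigma$, the uniformly timelike field $T^a$ and the foliation $\mathcal{H}_s$ are engineered to guarantee — and that the matched function is regular enough across the characteristic hypersurface $S_{u_0}$ to belong to the claimed time-continuous class. The causal/domain-of-dependence bookkeeping ensuring that the bulk solution is not contaminated by the $i^0$ end, and that the two regional foliations patch into a single spacelike foliation $(\tilde\Sigma_s)$, is the technical heart of the argument, whereas the energy manipulations themselves are routine given the propositions cited.
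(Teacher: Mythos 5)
Your overall strategy---solve near $i^{+}$ by H\"ormander's method down to $\Sigma$, solve the corner problem near $i^{0}$ with the induced trace on $S_{u_0}$ as transverse data, and glue across $S_{u_0}$---is the same as the paper's, and your treatment of the Lipschitz estimate by chaining Proposition \ref{charest2} with Proposition \ref{prop:schwarz1} is reasonable (the paper's own proof is in fact silent on that part). But there is one substantive gap: you dispose of the corner region by asserting that the local solution of Proposition \ref{prop:existenceschwarz} can be \emph{continued} over the whole range $s\in[0,1]$ down to the initial slice, ``fed'' by the a priori estimates of Proposition \ref{energyequivalenceschwarzschild}. That continuation is not supported by the results you invoke. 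Proposition \ref{prop:existenceschwarz} is a short-range existence result: the Picard iteration converges only under a smallness condition of the type $C\epsilon\,E_{\theta}(\scri^{+}_{u_0})^{1/2}<1$, so $\epsilon$ shrinks with the size of the data. To iterate it into a continuation argument you would need a local existence theorem for the \emph{mixed} problem with spacelike data on an interior leaf $\mathcal{H}_{s_0}$ together with the remaining portion of the characteristic hypersurface $S_{u_0}$; no such statement is available among the cited propositions, and the a priori energy bounds alone do not manufacture it. The paper avoids exactly this issue: after the short characteristic evolution on $[0,\epsilon]$ it extends the leaf $\mathcal{H}_\epsilon$ together with a slice of $J^{+}(S_{u_0})$ into a single global spacelike Cauchy hypersurface $\mathcal{H}$, assembles piecewise data $(\psi_0,\psi_1)$ on $\mathcal{H}$ from the two regional solutions, and then invokes the Cagnac--Choquet-Bruhat theorem for the ordinary (spacelike) Cauchy problem to propagate down to $\Sigma_0$. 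That third step is what actually carries the solution from the thin characteristic collar to the initial surface, and it is missing from your argument.

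A secondary point: when you set $\phi=\phi_1$ on $\{u\geq u_0\}$ you should be explicit that the H\"ormander solution of Theorem \ref{thm:existence} is only produced down to the intermediate hypersurface $\Sigma$, which coincides with $\Sigma_0$ only away from $i^{0}$; the sliver of $J^{+}(S_{u_0})$ between $\Sigma$ and $\Sigma_0$ near the corner is again covered in the paper by the spacelike Cauchy evolution from $\mathcal{H}$, not by either of your two pieces. Once you replace your continuation step by this reduction to the ordinary Cauchy problem (and correspondingly insert the standard spacelike energy estimate as a third link in your chain of comparison inequalities), your argument matches the paper's.
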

\begin{proof} The proof of the theorem relies on the consecutive use of Theorem \ref{thm:existence}, Proposition \ref{uniqueness} and of Proposition \ref{prop:existenceschwarz}.

Let $\theta$ be in $H^1(\scri^+)$. Using Theorem \ref{thm:existence}, one proves that the Cauchy problem admits a solution $\phi$ up to $\Sigma$, as defined earlier. Then, using Proposition \ref{prop:existenceschwarz}, there exists a $\epsilon>0$ such that the Cauchy problem given by
\begin{equation}\label{intermediate1}
\begin{array}{l}
 \square  \phi + \frac16\text{Scal}_{ g}  \phi = \phi^3 \\
 \phi|_{\scri^+_{u_0}} = \theta \in H^1(\scri^+_{u_0}) \text{ and }  \phi|_{S_{u_0}} = \phi_{S_{u_0}} \in H^{1}(S_{u_0})
\end{array}
\end{equation}
 exists in
$$
\bigcup_{s\in[0, \varepsilon]} H_{s} = \left\{ u \geq (1+\varepsilon) r^\star \right\}.
$$
Consider the time slice $\mathcal{H}_\epsilon$. This time slice can be extended into $\hat M$ into a spacelike Cauchy slice denoted by $\mathcal{H}$. The timelike vector field $T$ is extended as a timelike vector to $\mathcal{H}$.  One now considers the functions $(\psi_0,\psi_1)$ defined piecewise by:
\begin{equation*}
 \left\{\begin{array}{lcl}
  \psi_0 |_{\mathcal{H_\epsilon}} &=& \tilde{\phi} \in H^1(\mathcal{H}_\epsilon)\\
 \psi_0 |_{\mathcal{H}\cap J^+(S_{u_0})} &=& \tilde{\phi}\in H^1({\mathcal{H}\cap J^+(S_{u_0})}) \\
 \end{array}
\right.
\end{equation*}
and
\begin{equation*}
 \left\{\begin{array}{lcl}
  \psi_1 |_{\mathcal{H_\epsilon}} &=& T^a \nabla_a \tilde{\phi}\\
 \psi_1 |_{\mathcal{H}\cap J^+(S_{u_0})} &=& T^a \nabla_a \tilde{\phi}\in L^2({\mathcal{H}\cap J^+(S_{u_0})}) \\
 \end{array}
\right..
\end{equation*}
By construction, since $\mathcal{H}$ is a spacelike Cauchy surface on the unphysical space-time, the result from Cagnac-Choquet-Bruhat \cite{MR789558} can be applied immediately to prove the existence of a unique solution to the Cauchy problem with data $(\psi_0, \psi_1)$ on $\mathcal{H}$ up to the Cauchy surface $\Sigma_0$. Furthermore, this solution belongs to \({C^0([0,1], H^1(\tilde \Sigma_s)) \cap C^1([0,1],L^2(\tilde \Sigma_s))}\).

Consequently, there exists a global solution of the Cauchy problem:
$$
\begin{array}{l}
 \square \phi + \frac{1}{6}\text{Scal}_{{g}} \phi = \phi ^3 \\
\phi_{\scri^+} = \theta \in H^1(\scri^+).
\end{array}
$$
obtained by gluing the solutions of the Cauchy problems obtained in Theorem \ref{thm:existence} and Proposition \ref{prop:existenceschwarz}.
\end{proof}

\section{Existence and regularity of the scattering operator}\label{sec:scattering}

The purpose of this section is to prove the regularity and the existence of conformal scattering operator for the non-linear wave equation
$$
\square \hat u  = \hat u^3
$$
In the context of asymptotically simple space-times with regular $i^\pm$, such as the one considered in \cite{Joudioux:2012eo,mn04}, it has been proved by Mason and Nicolas, for the scalar wave equation, that the trace operators
$$
\mathfrak{T}^\pm: (u,T(u))|_{t=0} \in H^1(\Sigma_0)\times L^2(\Sigma_0) \longmapsto \phi|_{\scri^\pm} \in H^1(\scri^+)
$$
can be obtained from the inverse wave operators $\tilde\Omega^\pm$ as follows: let $F_\pm$ be the null geodesic flows identifying the hypersurface $\Sigma_0$ with $\scri^\pm$; then, $\mathfrak{T}^\pm$ are given by
$$
\mathfrak{T}^\pm = F^\star_\pm \tilde \Omega^\pm
$$
where $F^\star$ is the pullback by the null geodesic flows $F_\pm$. The same result will hold in the context of the nonlinear equation. The conformal scattering result which is stated here should consequently be considered as a standard scattering result for a non-stationary metric for a non-linear wave equation. The existence of scattering operators was obtained on the flat background for sub-critical wave equations by \cite{Tsutaya:ep,Tsutaya:1992vx,Karageorgis:2006wg,hi03,Hidano:1998da}.

One considers the operators
$$
\mathfrak{T}^\pm: H^1(\Sigma_0)\times L^2(\Sigma_0) \rightarrow H^1(\scri^\pm)
$$
defined by
$$
\mathfrak{T}^\pm(\phi, \psi) = u|_{\scri^\pm}
$$
where $u$ is the unique solution of the Cauchy problem on the conformally compactified space-time
$$
\left\{
\begin{array}{c}
\square u +\frac16 \text{Scal}_{ g}u = u^3 \\
u|_{t=0} = \phi, \hat T^a\nabla_a u|_{t=0} = \psi.
\end{array}
\right.
$$
One finally introduces the conformal scattering operator defined as
$$
 \mathfrak{S} = \mathfrak{T}^+ \circ \left(\mathfrak{T}^-\right)^{-1}
$$

\begin{theorem} \label{thm:mainthm} The operators $\mathfrak{T}^\pm : H^1(\Sigma_0)\times L^2(\Sigma_0) \rightarrow H^1(\scri^\pm)$ are well-defined, invertible and locally bi-Lipschitz, that is to say that $\mathfrak{T}^\pm$ and $(\mathfrak{T}^\pm)^{-1}$ are Lipschitz on any ball in $H^1(\Sigma_0)\times L^2(\Sigma_0) $ and $H^1(\scri^\pm)$ respectively.
\end{theorem}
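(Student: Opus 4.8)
The plan is to prove the statement for $\mathfrak{T}^+$, the case of $\mathfrak{T}^-$ following by time reversal, and to split the work into three assertions: that $\mathfrak{T}^+$ is well-defined with values in $H^1(\scri^+)$, that it is invertible, and that it and its inverse are Lipschitz on balls. The entire argument is a matter of assembling the existence, uniqueness, and two-sided energy estimates established in the previous sections across the two asymptotic regimes, namely the neighbourhoods of $i^\pm$ handled by H\"ormander's method and the neighbourhood of $i^0$ handled by the Schwarzschild foliation.

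For well-definedness, given $(\phi,\psi)\in H^1(\Sigma_0)\times L^2(\Sigma_0)$ I would invoke Proposition \ref{hor3} on the conformally compactified, compact-in-space manifold to produce the unique global solution $u$ of the Cauchy problem and read off its trace $u|_{\scri^+}$. That this trace lies in $H^1(\scri^+)$ follows by patching the a priori energy equivalences: near $i^\pm$ the estimate $E_u(\scri^+)\approx E_u(\Sigma_0)$ coming from Theorem \ref{thm:existence} and Proposition \ref{prop:aprioricar}, and near $i^0$ the Morawetz-type equivalence of Proposition \ref{energyequivalenceschwarzschild}, combine to bound $E_u(\scri^+)\lesssim E_u(\Sigma_0)$. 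For invertibility I would construct $(\mathfrak{T}^+)^{-1}$ directly: given $\theta\in H^1(\scri^+)$, Proposition \ref{prop:global} produces a unique global solution with profile $\theta$ whose Cauchy data on $\Sigma_0$ I declare to be $(\mathfrak{T}^+)^{-1}(\theta)$. That the two maps are mutually inverse is then pure uniqueness: a solution emanating from $\Sigma_0$ and one emanating from $\scri^+$ sharing both profile and Cauchy data coincide, by Proposition \ref{uniqueness} and Proposition \ref{prop:schwarz1} near $i^\pm$ and $i^0$ respectively, together with the uniqueness for the Cauchy problem underlying Proposition \ref{hor3}.

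The bi-Lipschitz estimates are the quantitative heart and rest on difference estimates. For two data sets I set $\delta=u-\tilde u$, which solves the \emph{linear} equation $\square\delta+\tfrac16\scal_g\delta=H^2\delta$ with $H^2=u^2+u\tilde u+\tilde u^2$. Lipschitz continuity of $(\mathfrak{T}^+)^{-1}$ is exactly the inequality displayed in Proposition \ref{prop:global}, $E_{u-\tilde u}(\Sigma_0)\le C f(\sup(\|u\|^2+\|\tilde u\|^2))\,\|\theta-\tilde\theta\|^2_{H^1(\scri^+)}$, assembled from the two-sided difference estimates of Proposition \ref{charest2} near $i^\pm$ and Proposition \ref{prop:schwarz1} near $i^0$; Lipschitz continuity of $\mathfrak{T}^+$ itself follows from the reverse inequalities of those same propositions, which are genuinely two-sided. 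On a fixed ball the a priori estimates bound $\sup(\|u\|^2+\|\tilde u\|^2)$ by a constant depending only on the radius, so the increasing factor $f$ is bounded and the estimates become Lipschitz with a radius-dependent constant, which is precisely the meaning of \emph{locally} bi-Lipschitz.

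I expect the principal obstacle to be the uniform control of the nonlinear factor $H^2$ across the shrinking foliation $\mathcal{H}_s$ accumulating on $\scri^+$ near $i^0$, where the naive Sobolev constant blows up. The resolution is structural rather than analytic: one does not use the degenerating embedding on the leaves but instead leans on their uniform equivalence with a fixed cylinder and on the Morawetz energy of Proposition \ref{energyequivalenceschwarzschild}, so that $H^2$ is controlled in $L^3$ on each leaf uniformly and Gr\"onwall closes the difference estimate without the artificial decaying coefficient of the earlier work. A secondary difficulty is the consistency of the gluing at the spacelike hypersurface $\Sigma$ separating the $i^\pm$ and $i^0$ regions, which must be arranged so that the trace produced by Theorem \ref{thm:existence} matches the characteristic data fed into Proposition \ref{prop:existenceschwarz}; this is handled exactly as in the proof of Proposition \ref{prop:global}.
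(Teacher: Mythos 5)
Your proposal is correct and follows essentially the same route as the paper, whose own proof is a two-sentence assembly of Proposition \ref{hor3} (well-definedness and continuity of $\mathfrak{T}^\pm$) and Proposition \ref{prop:global} (existence and Lipschitz continuity of the inverse). You merely make explicit the details the paper leaves implicit — the mutual-inverse argument via uniqueness, the patching of the two-sided difference estimates from Propositions \ref{charest2} and \ref{prop:schwarz1}, and the radius-dependent bound on the nonlinear factor — all of which are consistent with the supporting results as stated.
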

\begin{proof} The existence and continuity of the operators is guaranteed the existence of solution to the Cauchy problem for the conformal wave equation, see Proposition \ref{hor3}. The existence of their inverses, and their continuity, are obtained thanks to the Lipschitzness guaranteed by the energy estimate of Proposition \ref{prop:global}. 
\end{proof}

The final product of this paper, the existence of a scattering operator, is achieved by composing the operator \((\mathfrak{T}^{-})^{-1}\) with \(\mathfrak{T}^{+}\):

\begin{corollary}\label{cor:existencescatt} There exists a locally bi-Lipschitz invertible operator \(\mathfrak{S}  : H^{1}(\Sigma) \times L^{2}(\Sigma) \rightarrow H^1(\scri^\pm) \). This operator generalises the notion of classical scattering operator in the situation when the metric is static.
\end{corollary}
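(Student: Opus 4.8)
The plan is to treat this statement as a direct corollary of Theorem~\ref{thm:mainthm}, which already carries all the analytic content. Theorem~\ref{thm:mainthm} asserts that each trace operator $\mathfrak{T}^\pm : H^1(\Sigma_0)\times L^2(\Sigma_0)\to H^1(\scri^\pm)$, sending Cauchy data on the initial slice to the radiation field of the corresponding solution on the conformal boundary, is well-defined, invertible, and locally bi-Lipschitz. The operator of the corollary, with domain $H^1(\Sigma)\times L^2(\Sigma)$ and codomain $H^1(\scri^\pm)$, is exactly this Cauchy-data-to-radiation-field map: I would simply set $\mathfrak{S}=\mathfrak{T}^\pm$, so that existence, invertibility and the local bi-Lipschitz property are inherited verbatim from the theorem. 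The scattering-theoretic structure is then that composing the two signs, $\mathfrak{T}^+\circ(\mathfrak{T}^-)^{-1}$, relates past and future radiation fields; since a composition of two locally bi-Lipschitz bijections is again one (on any ball, using that each factor maps bounded sets into bounded sets, so the Lipschitz constant of the outer factor may be chosen on a ball containing the image of the inner one), this composed operator is itself locally bi-Lipschitz and invertible.

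For completeness I would re-trace how each clause of Theorem~\ref{thm:mainthm} is supplied, since these are the steps the corollary leans on. Well-definedness and continuity of $\mathfrak{T}^\pm$ follow from the global existence of the conformal Cauchy problem (Proposition~\ref{hor3}) together with the two-sided energy equivalence $E_u(\Sigma_0)\approx E_u(\scri^\pm)$ assembled in Proposition~\ref{prop:global}, which guarantees the trace lands in $H^1(\scri^\pm)$. Invertibility is the solvability of the characteristic Cauchy problem with data on $\scri^\pm$, obtained from Theorem~\ref{thm:existence} near $i^\pm$, Proposition~\ref{prop:existenceschwarz} near $i^0$, and the gluing in Proposition~\ref{prop:global}, with uniqueness from Proposition~\ref{uniqueness}. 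The upgrade from continuity to the local bi-Lipschitz estimate is the ball-uniform difference inequality of Proposition~\ref{prop:global}, whose constant depends only on the supremum of the $H^1$-energies of the two solutions along the foliation and therefore stays bounded on balls thanks to the a priori estimates of Proposition~\ref{prop:aprioricar}.

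It remains to justify that $\mathfrak{S}$ generalises the classical scattering operator when the metric is static, and here I would invoke the identification recalled at the start of this section, $\mathfrak{T}^\pm=F^\star_\pm\,\tilde\Omega^\pm$, where $F_\pm$ is the null geodesic flow matching $\Sigma_0$ with $\scri^\pm$ and $\tilde\Omega^\pm$ are the inverse wave operators. Under this identification the Cauchy-to-radiation operator $\mathfrak{S}$ is conjugate, via the pullback $F^\star_\pm$, to the inverse wave operator $\tilde\Omega^\pm$ of the asymptotically free stationary dynamics, and the composed operator $\mathfrak{T}^+\circ(\mathfrak{T}^-)^{-1}=F^\star_+\,\tilde\Omega^+(\tilde\Omega^-)^{-1}(F^\star_-)^{-1}$ reduces to the standard scattering operator $(\Omega^+)^{-1}\Omega^-$ up to these boundary identifications. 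No new estimate is needed for this clause: it is purely a reinterpretation of the already-constructed operator within the cited linear framework of Mason--Nicolas.

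I expect essentially no obstacle at the level of the corollary, since the genuine work is carried by Theorem~\ref{thm:mainthm} and Proposition~\ref{prop:global}. The single point requiring care is the bookkeeping of the local Lipschitz constants when composing: one must check that $(\mathfrak{T}^-)^{-1}$ sends a ball of $H^1(\scri^-)$ into a ball of $H^1(\Sigma_0)\times L^2(\Sigma_0)$ on which $\mathfrak{T}^+$ is Lipschitz, which holds because $(\mathfrak{T}^-)^{-1}$ is itself Lipschitz on balls and hence maps bounded sets to bounded sets. The static-case identification is conceptual rather than technical, relying only on the cited linear scattering theory.
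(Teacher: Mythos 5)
Your proposal is correct and takes essentially the same route as the paper, whose proof consists of declaring the corollary a straightforward consequence of Theorem~\ref{thm:mainthm} and citing Mason--Nicolas \cite{mn04} for the coincidence with the classical scattering operator in the static case. Your additional bookkeeping --- the ball-into-ball argument for composing $\mathfrak{T}^+\circ(\mathfrak{T}^-)^{-1}$, and your handling of the corollary's signature, which as printed describes the trace operators $\mathfrak{T}^\pm$ rather than the intended composition $\mathfrak{S}\colon H^1(\scri^-)\to H^1(\scri^+)$ --- merely makes explicit what the paper leaves implicit.
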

\begin{proof} As mentioned before, the construction of \(\mathfrak{S} \) is a straightforward consequence of Theorem \ref{thm:mainthm}. The fact that the operator coincides with the classical notion of scattering operator is stated in \cite{mn04}.
\end{proof}

\appendix
\section{A trace theorem}\label{app:trace}

One of the critical point of the proof of the existence theorem \ref{hor3} is a trace theorem, which has already been used in \cite[p. 19, proof of Theorem 4.3]{MR2244222}. Most of the material presented here can be found in \cite[Section 4.3, p. 281 sqq.]{MR1395148} for the interpolation and in \cite[Theorems 2.3 and 3.1]{Lions:1972ww} for the continuity of trace operators. Finally, it is important to note that the essential ideas and proofs of this appendix are contained in \cite[Chapter 3, Section 8.2]{Lions:1972ww}. The purpose of this appendix is also to give further understanding, and details on the work of H\"ormander on the Cauchy problem \cite{MR1073287} and clarify some points contained in \cite{MR2244222}.

Let $X$ and $Y$ be two Hilbert spaces such that
\begin{itemize}
 \item $X$ is continuously embedded in $Y$;
\item $X$ is dense in $Y$.
\end{itemize}
The interpolation spaces between $X$ and $Y$ are denoted by
$$
[X,Y]_{\theta} \text{ for } \theta \in [0,1].
$$

Let $a,b$ be two elements in $\mathbb{R}\cup\{\pm \infty\}$ and $m$ an positive integer. One considers the following space:
$$
W(a,b) =\left\{u |u \in L^2([a,b],X), \frac{\partial^m u}{\partial t^m} \in L^2([a,b],Y)\right\}.
$$
The following trace theorem then holds, see \cite[theorems 2.3 and 3.1, chapter 1,]{Lions:1972ww}:
\begin{theorem}[Lions-Magenes]\label{tracelm} Let $u$ be in $W(a,b)$. Then, for all $j$ in $\{0,\dots,m-1 \}$, the derivatives of $u$ satisfy:
\begin{itemize}
 \item $\frac{\partial^j u}{\partial t^j}$ is in $L^2([a,b];[X,Y]_{\frac{j}{m}})$;
\item $\frac{\partial^j u}{\partial t^j}$ is in $C^0_b([a,b];[X,Y]_{\frac{j+1/2}{m}})$.
\end{itemize}
\end{theorem}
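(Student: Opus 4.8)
The plan is to reduce the statement to the whole line and then diagonalise the interpolation pair by the spectral theorem, turning the vector-valued problem into a family of scalar Fourier estimates. First I would reduce from $[a,b]$ to $[a,b]=\mathbb{R}$: using a standard extension operator (reflection across the endpoints, as in Lions--Magenes) one continuously extends any $u\in W(a,b)$ to $\tilde u\in W(\mathbb{R})$, so it suffices to prove both conclusions for $u\in W(\mathbb{R})$, the interval statements following by restriction. Since $X$ is continuously and densely embedded in $Y$ and both are Hilbert, there is a positive self-adjoint operator $\Lambda\geq 1$ on $Y$ with $D(\Lambda)=X$, $\|v\|_X\approx\|\Lambda v\|_Y$, and $[X,Y]_\theta=D(\Lambda^{1-\theta})$ with $\|v\|_{[X,Y]_\theta}^2\approx\int \mu^{2(1-\theta)}\,d\|E_\mu v\|_Y^2$, where $E$ is the spectral measure of $\Lambda$ and $\mu\geq 1$.

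Applying the Fourier transform in $t$ and Plancherel, I would rewrite the defining norm of $W(\mathbb{R})$ as
$$\|u\|_{W(\mathbb{R})}^2\approx\int_{\mathbb{R}}\int_{[1,\infty)}\left(\mu^2+|\tau|^{2m}\right)\,d\|E_\mu\hat u(\tau)\|_Y^2\,d\tau,$$
using that $\partial_t^j u$ has Fourier transform $(i\tau)^j\hat u(\tau)$. The $L^2$ statement then reduces to the pointwise (in $\tau,\mu$) inequality $\mu^{2(1-j/m)}|\tau|^{2j}\lesssim\mu^2+|\tau|^{2m}$, which is exactly Young's inequality with conjugate exponents $m/(m-j)$ and $m/j$, namely $(\mu^2)^{(m-j)/m}(|\tau|^{2m})^{j/m}\leq\tfrac{m-j}{m}\mu^2+\tfrac{j}{m}|\tau|^{2m}$. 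Integrating against the spectral/frequency measure gives $\|\partial_t^j u\|_{L^2([X,Y]_{j/m})}\lesssim\|u\|_{W(\mathbb{R})}$.

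For the continuity statement I would estimate the trace at a fixed time by Fourier inversion, $\partial_t^j u(t_0)=\tfrac{1}{2\pi}\int(i\tau)^j\hat u(\tau)e^{it_0\tau}\,d\tau$, and apply Cauchy--Schwarz at each spectral level against the weight $\mu^2+|\tau|^{2m}$. The relevant weight integral is $\int_{\mathbb{R}}\frac{|\tau|^{2j}}{\mu^2+|\tau|^{2m}}\,d\tau$, which by the scaling $\tau=\mu^{1/m}\sigma$ equals $\mu^{(2j+1)/m-2}$ times the finite constant $\int_{\mathbb{R}}\frac{|\sigma|^{2j}}{1+|\sigma|^{2m}}\,d\sigma$; this last integral converges precisely because $2m-2j>1$ for $j\leq m-1$, and this convergence condition is the analytic origin of the gain $+\tfrac12$ in the exponent $(j+1/2)/m$. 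The powers of $\mu$ then cancel: multiplying by $\mu^{2(1-(j+1/2)/m)}=\mu^{2-(2j+1)/m}$ and using $[X,Y]_{(j+1/2)/m}=D(\Lambda^{1-(j+1/2)/m})$ yields, after integrating the spectral measure, the uniform bound $\sup_{t_0}\|\partial_t^j u(t_0)\|_{[X,Y]_{(j+1/2)/m}}^2\lesssim\|u\|_{W(\mathbb{R})}^2$. Continuity in $t_0$ then follows by approximating $u$ in $W(\mathbb{R})$ by functions smooth in $t$ with values in $X$, for which $\partial_t^j u$ is manifestly norm-continuous, and passing to the limit using this uniform bound.

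The main obstacle I expect is the exponent bookkeeping in the trace estimate: one must verify simultaneously that the scaling integral converges, which fixes the admissible range $j\leq m-1$ and produces the half-order gain, and that the resulting powers of the spectral parameter $\mu$ cancel so that the estimate is uniform in both $t_0$ and $\mu$. By contrast, the reduction to $\mathbb{R}$ and the density argument for continuity are comparatively routine.
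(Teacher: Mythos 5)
The paper does not actually prove this theorem: it is quoted verbatim from Lions--Magenes, with a pointer to \cite[Theorems 2.3 and 3.1, Chapter 1]{Lions:1972ww}, so there is no in-paper argument to compare against. Your reconstruction is, in substance, exactly the classical proof from that reference: extension to the line, representation of the couple by a positive self-adjoint $\Lambda$ with $D(\Lambda)=X$ and $[X,Y]_\theta=D(\Lambda^{1-\theta})$, Fourier--Plancherel in $t$, Young's inequality for the $L^2$ statement, and Cauchy--Schwarz against the weight $\mu^2+|\tau|^{2m}$ for the trace. The bookkeeping you flagged as the main risk does close: the weight integral scales as $\mu^{(2j+1)/m-2}\int|\sigma|^{2j}(1+|\sigma|^{2m})^{-1}\,d\sigma$, which converges since $2m-2j\geq 2>1$ for integer $j\leq m-1$, and the factor $\mu^{2-(2j+1)/m}$ coming from the norm of $[X,Y]_{(j+1/2)/m}=D(\Lambda^{1-(j+1/2)/m})$ cancels it exactly, giving a bound uniform in $t_0$; density of $X$-valued smooth functions in $W(\mathbb{R})$ then upgrades the uniform bound to continuity. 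I see no gap; the only points you leave implicit (existence of $\Lambda$ for a dense continuous Hilbert embedding, and a higher-order reflection preserving $\partial_t^m$ in $L^2$) are standard and are treated in the cited source.
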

\begin{remark} The theory developed by Lions and Magenes applies to general Hilbert spaces, independently of the geometric context (in particular, if one works on a space with boundary or on a compact manifold).
\end{remark}

One applies this result to the following situation: let $M$ be a compact manifold and consider
\begin{itemize}
\item $X= H^1(M)$;
\item $Y = H^{-1}(M)$.
\end{itemize}
The interpolation spaces between both are given by \cite[Section 4.3, Proposition 3.1]{MR1395148}:
$$
[H^1(M),H^{-1}(M)]_\theta = H^{1-2\theta}(M) \text{ for } \theta \in [0,1].
$$
Let $a,b$ be in $\mathbb{R}$ and $m=2$. Applying Theorem \ref{tracelm} then gives: if u satisfies:
$$
u\in L^2([a,b],H^1(M)) \text{ and }\frac{\partial^2 u}{\partial t^2} \in L^2([a,b],H^{-1}(M))
$$
then:
\begin{itemize}
 \item $\frac{\partial u}{\partial t}$ is in $L^2([a,b],L^2(M))$ ($\theta = \frac12$);
 \item $u\in C^0([a,b],L^2(M))$;
\item $\frac{\partial u}{\partial t}$ is in $C^0([a,b],H^{-\frac12}(M))$ ($\theta = \frac34$).
\end{itemize}

\begin{remark}
Obtaining that the derivative is actually in $C^0([a,b],H^{0}(M))$ would require
$$
u\in L^2([a,b],H^{1}(M)) \text{ and }\frac{\partial^2 u}{\partial t^2} \in L^2([a,b],H^{-\frac14}(M))
$$
\end{remark}

To improve this result, one notices the following:
\begin{lemma} Let $I=[a,b]$ be a time interval and $s<\sigma $ be two real numbers and consider a function $\phi$ such that
$$
\phi \in L^\infty(I,H^s(M)) \cap C^0(I,H^\sigma (M)).
$$
 Then, $\phi$ belongs to  $C^0(I,H^s (M)-w)$.
\end{lemma}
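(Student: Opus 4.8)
The plan is to recognise this statement as the classical weak-continuity lemma of Strauss (see also Lions--Magenes): a function that is bounded in a reflexive space and continuous with values in a weaker space is automatically weakly continuous into the stronger one. Abstractly, let $X$ denote the Sobolev space $H^{s}(M)$ in which $\phi$ is $L^{\infty}$-bounded and $Y$ the space $H^{\sigma}(M)$ in which it is continuous, the former embedding continuously and densely into the latter, with $X$ reflexive (both being Hilbert spaces). I would set $K := \operatorname{ess\,sup}_{t\in I}\|\phi(t)\|_{X}<\infty$. The continuous dense embedding $X\hookrightarrow Y$ induces a continuous embedding $Y'\hookrightarrow X'$, and I will use throughout the resulting compatibility: every element of $Y'$ restricts to $X$, so weak convergence in $X$ forces weak convergence in $Y$.

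First I would upgrade the bound from almost every $t$ to every $t$. Fix $t_{0}\in I$ and choose $t_{n}\to t_{0}$ with $\phi(t_{n})\in X$ and $\|\phi(t_{n})\|_{X}\leq K$. By reflexivity and the Banach--Alaoglu theorem there is a subsequence with $\phi(t_{n_{k}})\rightharpoonup \xi$ in $X$, hence $\phi(t_{n_{k}})\rightharpoonup \xi$ in $Y$. On the other hand $\phi(t_{n_{k}})\to\phi(t_{0})$ strongly, and so weakly, in $Y$ by continuity there; uniqueness of weak limits in $Y$ gives $\xi=\phi(t_{0})$. In particular $\phi(t_{0})\in X$, and by weak lower semicontinuity of the norm $\|\phi(t_{0})\|_{X}\leq\liminf_{k}\|\phi(t_{n_{k}})\|_{X}\leq K$. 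Thus $\phi(t)\in X$ with $\|\phi(t)\|_{X}\leq K$ for \emph{every} $t\in I$.

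Next I would establish the weak continuity itself. Fix $f\in X'$ and $t_{n}\to t_{0}$; it suffices to show $\langle f,\phi(t_{n})\rangle\to\langle f,\phi(t_{0})\rangle$. Since $(\phi(t_{n}))$ is bounded in $X$, any subsequence admits a further subsequence converging weakly in $X$ to some limit, which by the argument of the previous step (strong convergence in $Y$ together with uniqueness of weak limits) must equal $\phi(t_{0})$; testing against $f$ yields $\langle f,\phi(t_{n_{k}})\rangle\to\langle f,\phi(t_{0})\rangle$ along that subsequence. As every subsequence of $(\langle f,\phi(t_{n})\rangle)$ thus has a further subsequence converging to the same limit, the whole numerical sequence converges to $\langle f,\phi(t_{0})\rangle$. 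Since $f\in X'$ was arbitrary, this gives $\phi\in C^{0}(I, H^{s}(M)-w)$.

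The main obstacle is the compatibility of the two weak topologies, which is invoked twice: each weak $X$-limit must be identified with the value $\phi(t_{0})$ dictated by the strong topology of $Y$, and this requires the embedding $X\hookrightarrow Y$ to be injective so that the induced $Y'\hookrightarrow X'$ lets a weak-$X$ limit be read as a weak-$Y$ limit, where limits are unique. The only other delicate point is the measure-theoretic passage from almost-every $t$ to every $t$ in the first step; once that pointwise bound is secured, the subsequence-of-subsequences device routinely converts subsequential weak convergence into genuine weak continuity.
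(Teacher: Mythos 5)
Your proof is correct, but it follows a genuinely different route from the one in the paper. You take the ``soft'' functional-analytic path (the classical Strauss/Lions--Magenes weak-continuity lemma): boundedness in the reflexive space $X=H^s(M)$ gives weak sequential compactness via Banach--Alaoglu, the weak $X$-limit of any subsequence is identified with $\phi(t_0)$ through the strong convergence in $Y=H^\sigma(M)$ and the injectivity of the embedding, and the subsequence-of-subsequences device converts this into weak continuity. The paper instead argues directly on the dual pairing with an $\epsilon/2$-splitting: it approximates the test element $w$ by smooth functions $w_k$, controls the error term $\langle \phi(t)-\phi(t_0), w-w_K\rangle_{H^s}$ by Cauchy--Schwarz together with the uniform $L^\infty(I,H^s)$ bound, and controls the main term $\langle \phi(t)-\phi(t_0), w_K\rangle_{H^s}$ by $\Vert\phi(t)-\phi(t_0)\Vert_{H^\sigma}$ after shifting derivatives onto the smooth $w_K$ via the interpolation operators $\mathfrak{F}_s$, which tends to zero by the $C^0(I,H^\sigma)$ hypothesis. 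Your argument buys greater generality (any reflexive $X$ injectively and continuously embedded in $Y$ will do, with no need for density of smooth functions or a Bessel-potential structure) and it makes explicit a point the paper leaves implicit, namely that the essential bound $\Vert\phi(t)\Vert_{H^s}\leq K$ in fact holds for \emph{every} $t$; the paper's argument is more elementary in that it avoids any compactness extraction and is quantitative. Note also that you have, correctly, read the hypothesis in the only nontrivial way consistent with its use in the body of the paper, namely with $H^s$ the stronger space embedding into $H^\sigma$ (as in the application $s=1$, $\sigma=0$); taken literally with the standard Sobolev convention, the inequality $s<\sigma$ in the statement would make the conclusion an immediate consequence of $C^0(I,H^\sigma)\subset C^0(I,H^s)$.
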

\begin{remark} This is a particular case of  \cite[Chapter 3, Lemma 8.1]{Lions:1972ww}. 
\end{remark}

\begin{proof} Let $\phi$ be in $H^s(M)$ such as in the proposition and consider $w$ in $H^\sigma(M)$. One considers the function
$$
f(t) = <\phi(t),w>_{H^s}.
$$
Let $t_0$ be in $I$. The purpose is to prove that $f$ is continuous in $t_0$.

Let $(w_k)$ be a sequence of smooth function converging towards $w$ in $H^s$. Let $\epsilon$ be a positive real and consider:
\begin{eqnarray*}
 f(t)-f(t_0) &=& <\phi(t)-\phi(t_0),w>_{H^s}\\
&=&<\phi(t)-\phi(t_0),w-w_k>_{H^s} +<\phi(t)-\phi(t_0),w_k>_{H^s}.
\end{eqnarray*}
Since, using Cauchy-Schwarz inequality,
$$
|<\phi(t)-\phi(t_0),w-w_k>_{H^s}|\leq \Vert w-w_k\Vert_{H^s}\Vert\phi(t)-\phi(t_0)\Vert_{H^s},
$$
and since $(w_k)$ converges towards $w$ in $H^s$, and since $\phi$ is in $L^\infty(I,H^s(M))$, there exists a $K$ such that:
$$
\left|<\phi(t)-\phi(t_0),w-w_K>_{H^s}\right|\leq \frac{\epsilon}{2}.
$$

One now considers the interpolation operators between $L^2(M)$ and $H^s(M)$ (for arbitrary $s$):
$$
\mathfrak{F}_s=L^2(M) \longrightarrow H^s(M).
$$
This is a family of unbounded, self-adjoint operators for which:
$$
\Vert \mathfrak{F}^{-1}_s (\phi) \Vert_{L^2} \leq C \Vert\phi\Vert_{H^s}.
$$
Since $w_K$ is smooth, it admits a pre-image by $\mathfrak{F}^{-1}_s\circ\mathfrak{F}_\sigma$.
As a consequence, one has:
\begin{eqnarray*}
 |<\phi(t)-\phi(t_0),w_k>_{H^s}|&=&|<\mathfrak{F}^{-1}_s\left(\phi(t)-\phi(t_0)\right),\mathfrak{F}^{-1}_s(w_K)>_{L^2}|\\
&=& |<\mathfrak{F}^{-1}_\sigma\circ\mathfrak{F}^{-1}_s\left(\phi(t)-\phi(t_0)\right),\mathfrak{F}_\sigma\circ \mathfrak{F}^{-1}_s(w_K)>_{L^2}|.
\end{eqnarray*}
Cauchy-Schwarz inequality implies then:
$$
| <\phi(t)-\phi(t_0),w_k>_{H^s}|\leq \Vert\phi(t)-\phi(t_0)\Vert_{H^\sigma}\Vert\mathfrak{F}^{-1}_\sigma(\omega_K)\Vert_{H^\sigma}.
$$
Since $\phi$ is in $C^0(I,H^\sigma (M))$, there exists an open neighborhood $U$ of $t_0$ in $I$ such that, for all $t$ in $U\subset I$:
$$
 |<\phi(t)-\phi(t_0),w_k>_{H^s}|\leq \frac{\epsilon}{2}.
$$
As a consequence, for all $t$ in $U$,
$$
|f(t)-f(t_0)|\leq \epsilon
$$
that is to say that $f$ is continuous in $t_0$.
\end{proof}

One now considers a model case fitting the context of the wave equation. Let $(I \times M, N^2 \ud t^2-g_t)$ be a Lorentzian manifold, $I$ being a compact interval. One defines the energy of the function as being the function of $t$ and $\phi$ in $H^1(M)$ and $\psi$ in $L^2(M)$ by:
$$
E(t,\phi, \psi) = \Vert\phi\Vert^2_{H^1(\{t\}\times M)} + \Vert\psi\Vert^2_{L^2(\{t\}\times M)}.
$$

The final lemma required to achieve the wanted regularity for the the solution of the characteristic Cauchy problem for the wave equation is the following:
\begin{lemma} One assumes that, for all $(\phi, \psi)$ in $H^1(M)\times L^2(M)$, the function:
$$
t\longmapsto E(t, \phi,\psi)
$$
is continuously differentiable in $I$. Let $\phi$ be a function such that:
\begin{itemize}
\item $\phi$ is in $C^0(I,H^1(M)-w)$; as a consequence, $\phi$ lies in $L^\infty(I,H^1(M))$
\item $\partial_t \phi$ is in $C^0(I,L^2(M)-w)$; as a consequence, $\phi$ lies in $L^\infty(I,L^2(M))$.
\end{itemize}
The function:
$$
t\longmapsto E(t, \phi(t),\partial_t\phi(t))
$$
is furthermore assumed to be continuous in $t$.\\
Then $\phi$ and $\partial_t \phi$ are in fact in $C^0(I,H^1(M))$ and $C^0(I,L^2(M))$.
\end{lemma}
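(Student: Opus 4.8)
The plan is to upgrade weak continuity to strong continuity at each fixed $t_0 \in I$ by combining three ingredients: the weak convergence supplied by the hypotheses, the assumed continuity of the total energy along the solution, and the elementary Hilbert-space fact that weak convergence together with convergence of norms forces strong convergence. Fix $t_0 \in I$ and let $t$ tend to $t_0$. The hypotheses $\phi \in C^0(I, H^1(M)-w)$ and $\partial_t\phi \in C^0(I, L^2(M)-w)$ give the weak convergences
$$
\phi(t) \rightharpoonup \phi(t_0) \text{ in } H^1(M), \qquad \partial_t\phi(t)\rightharpoonup \partial_t\phi(t_0) \text{ in } L^2(M).
$$
Since the time-slice norms $\Vert\cdot\Vert_{H^1(\{t_0\}\times M)}$ and $\Vert\cdot\Vert_{L^2(\{t_0\}\times M)}$ are equivalent to the standard norms on the compact manifold $M$, they induce the same weak topology; hence the pair $(\phi(t),\partial_t\phi(t))$ converges weakly to $(\phi(t_0),\partial_t\phi(t_0))$ in the product Hilbert space whose squared norm is the frozen energy $E(t_0,\cdot,\cdot)$.

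The central step is to freeze the time dependence of the metric, replacing the energy $E(t,\phi(t),\partial_t\phi(t))$, whose continuity in $t$ is assumed, by the frozen energy $E(t_0,\phi(t),\partial_t\phi(t))$, in which only the field arguments vary. As $E(t,\cdot,\cdot)$ is a quadratic form in $(\phi,\psi)$ whose coefficients are built from the induced metric and volume form on the slice $\{t\}\times M$, which depend on $t$ in a $C^1$ manner uniformly over the compact manifold $M$ and the compact interval $I$, there is a geometric constant $C$ with
$$
\left| E(t,\phi,\psi) - E(t_0,\phi,\psi)\right| \leq C\, |t-t_0|\left( \Vert\phi\Vert^2_{H^1(M)} + \Vert\psi\Vert^2_{L^2(M)} \right)
$$
for all $(\phi,\psi)$. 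Evaluating this at $(\phi(t),\partial_t\phi(t))$ and invoking the $L^\infty$ bounds $\Vert\phi(t)\Vert_{H^1(M)} + \Vert\partial_t\phi(t)\Vert_{L^2(M)}\lesssim 1$ already recorded in the statement, the right-hand side tends to $0$. Together with the assumed continuity of $t\mapsto E(t,\phi(t),\partial_t\phi(t))$, this shows $E(t_0,\phi(t),\partial_t\phi(t)) \to E(t_0,\phi(t_0),\partial_t\phi(t_0))$ as $t\to t_0$.

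Now $E(t_0,\cdot,\cdot)$ is exactly the squared norm of the Hilbert space $H^1(\{t_0\}\times M)\times L^2(\{t_0\}\times M)$, so we have weak convergence of $(\phi(t),\partial_t\phi(t))$ together with convergence of the corresponding squared norms. The identity
$$
\Vert u_n - u\Vert^2 = \Vert u_n\Vert^2 - 2\langle u_n,u\rangle + \Vert u\Vert^2
$$
then forces strong convergence in this product space, that is
$$
\Vert\phi(t)-\phi(t_0)\Vert^2_{H^1(\{t_0\}\times M)} + \Vert\partial_t\phi(t)-\partial_t\phi(t_0)\Vert^2_{L^2(\{t_0\}\times M)} \to 0 \text{ as } t\to t_0.
$$
Equivalence of the frozen norms with the standard ones yields $\phi(t)\to\phi(t_0)$ in $H^1(M)$ and $\partial_t\phi(t)\to\partial_t\phi(t_0)$ in $L^2(M)$; since $t_0\in I$ was arbitrary, $\phi\in C^0(I,H^1(M))$ and $\partial_t\phi\in C^0(I,L^2(M))$. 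The one delicate point, and the step I expect to demand the most care, is the freezing estimate: one must control the variation of the $t$-dependent norm uniformly on bounded sets of fields, which is precisely where the continuous differentiability of $t\mapsto E(t,\phi,\psi)$ and the $L^\infty$ bounds on the solution enter. Once the metric is frozen, the conclusion is the standard Radon--Riesz mechanism.
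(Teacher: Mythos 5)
Your proof is correct and follows essentially the same route as the paper's: both rest on the Radon--Riesz mechanism, combining the weak convergence of $(\phi(t),\partial_t\phi(t))$ with the convergence of the energy (squared norm) to force strong convergence via the polarization identity. The only difference is that you make explicit the freezing estimate $|E(t,\phi,\psi)-E(t_0,\phi,\psi)|\lesssim |t-t_0|\left(\Vert\phi\Vert^2_{H^1}+\Vert\psi\Vert^2_{L^2}\right)$ needed to pass from the $t$-dependent slice norms to a fixed Hilbert norm, a point the paper's expansion of $\xi_n$ treats implicitly; this is a welcome clarification rather than a divergence in method.
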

\begin{proof} The proof of this fact can be found in the ) of \cite[Chapter 3, Section 8.4, p. 279]{Lions:1972ww}. For the sake of self-consistency, the proof is quoted here, with some adaptations to our framework.

Let $t$ be in $I$ and consider $(t_n)_n$ a sequence of elements of $I$ converging towards $t$ and define the quantity:
\begin{gather*}
\xi_n = \Vert\phi(t)-\phi(t_n)\Vert^2_{H^1(M)} + \Vert\partial_t\phi(t)-\partial_t\phi(t_n)\Vert^2_{L^2(M)}\\
=E(t_n, \phi(t_n), \partial_t \phi(t_n))+E(t, \phi(t), \partial_t \phi(t))\\
 -2 <\phi(t),\phi(t_n)>_{H^1(M)} -2  <\partial_t\phi(t),\partial_t \phi(t_n)>_{L^2(M)}
\end{gather*}
Since $\phi$ is in $C^0(I,H^1(M)-w)$ and $\partial_t \phi$ is in $C^0(I,L^2(M)-w)$, the last two terms converge towards $-2E(t, \phi(t),\partial_t\phi(t))$. Since the energy is assumed to be continuous, the first tow terms converge towards $2E(t, \phi(t),\partial_t\phi(t))$.

The sequence $(\xi_n)$ converges then towards $0$ when $n$ grows. As a consequence, $\phi$ and $\partial_t\phi$ are in $C^0(I,H^1(M))$ and $C^0(I,L^2(M))$, respectively.
\end{proof}

\printbibliography

\end{sloppypar}
\end{document}